\newcommand{\eps}{\varepsilon}
\newcommand{\R}{\mathbb{R}}
\newcommand{\N}{\mathbb{N}}
\newcommand{\nat}{\in\N}
\newcommand{\subsub}{\subset\!\subset}
\newcommand{\Om}{\Omega}
\newcommand{\Ombar}{\overline{\Om}}
\newcommand{\Wnezom}{W_0^{1,2}(\Om)}
\newcommand{\Lzom}{L^2(\Om)}
\newcommand{\phii}{\varphi}
\newcommand{\thetaa}{\vartheta}
\newcommand{\ny}{\nu}
\newcommand{\my}{\mu}
\newcommand{\set}[1]{\left\{#1\right\}}
\newcommand{\io}{\int_\Om}
\newcommand{\ido}{\int_{\partial\Om}}
\newcommand{\intninf}{\int_0^\infty}
\newcommand{\intnT}{\int_0^T}
\newcommand{\na}{\nabla}
\newcommand{\amrand}{\rvert_{\partial\Om}}
\newcommand{\bdry}{\rvert_{\partial\Om}}
\newcommand{\dOm}{\partial\Om}
\newcommand{\pO}{\partial\Om}
\newcommand{\what}{\widehat{w}}
\newcommand{\nn}{\nonumber}
\newcommand{\ot}{\leftarrow}
\newcommand{\wto}{\rightharpoonup}
\newcommand{\wstarto}{\stackrel{\star}{\rightharpoonup}}
\newcommand{\liminfk}{\liminf_{k\to \infty}}
\newcommand{\liminfl}{\liminf_{l\to \infty}}
\newcommand{\limk}{\lim_{k\to\infty}}
\newcommand{\limtinf}{\lim_{t\to\infty}}
\newcommand{\ue}{u_\eps}
\newcommand{\une}{u_{0\eps}}
\newcommand{\uen}{\une}
\newcommand{\uek}{u_{\eps_k}}
\newcommand{\uet}{u_{\eps t}}
\newcommand{\norm}[2][ ]{\left\|#2\right\|_{#1}}
\newcommand{\sub}{\subset}
\newcommand{\subsubset}{\sub\sub}
\newcommand{\Laplace}{\Delta}
\newcommand{\Lap}{\Laplace}
\newcommand{\col}{\colon}
\newcommand{\bit}{\begin{itemize}}
\newcommand{\eit}{\end{itemize}}
\newcommand{\ddt}{\frac{d}{dt}}
\newcommand{\downto}{\searrow}
\newtheorem{theorem}{Theorem}
\newtheorem{remark}[theorem]{Remark}
\newtheorem{defn}[theorem]{Definition}
\newtheorem{lemma}[theorem]{Lemma}
\newtheorem{cor}[theorem]{Corollary}
\DeclareMathOperator{\esssup}{esssup}
\DeclareMathOperator{\esslim}{esslim}
\newcommand{\esslimtinf}{\esslim_{t\to \infty}}
\title{Equilibration of unit mass solutions to a degenerate parabolic equation with a nonlocal gradient nonlinearity}
\author{Johannes Lankeit\thanks{Institut f\"ur Mathematik, Universit\"at Paderborn, Warburger Str. 100, 33098 Paderborn, Germany; email: \mbox{johannes.lankeit@math.upb.de}}}
\begin{document}
 \maketitle 
 \begin{abstract}
\noindent
{\bf Abstract:} We prove convergence of positive solutions to 
\[
 u_t = u\Delta u + u\int_{\Omega} |\nabla u|^2, \qquad u\rvert_{\partial\Omega} =0, \qquad u(\cdot,0)=u_0
\]
 in a bounded domain $\Omega\subset \mathbb{R}^n$, $n\ge 1$, with smooth boundary in the case of $\int_\Omega u_0=1$ and identify the $W_0^{1,2}(\Omega)$-limit of $u(t)$ as $t\to \infty$ as the solution of the corresponding stationary problem. 
This behaviour is different from the cases of $\int_\Omega u_0<1$ and $\int_\Omega u_0>1$ which are known to result in convergence to zero or blow-up in finite time, respectively.\\
\noindent 
The proof is based on a monotonicity property of $\int_{\Omega} |\nabla u|^2$ along trajectories and the analysis of an associated constrained minimization problem.\\[0.3cm]
\noindent
{\bf Keywords:} degenerate diffusion, nonlocal nonlinearity, long-term behaviour\\
{\bf Math Subject Classification (2010):}  35B40, 35K55, 35K65
 \end{abstract}

\section{Introduction}
The problem 
\begin{equation}\label{eq:prob}
 u_t = u\Delta u + u\io |\na u|^2, \qquad u\amrand =0, \qquad u(\cdot,0)=u_0,
\end{equation}
which we want to investigate in this article with respect to the large-time behaviour of its solutions, combines two interesting and mathematically challenging mechanisms: degeneracy of diffusion and nonlocal contributions of gradient terms.

Already solutions to parabolic equations involving a sufficiently strong degeneracy in addition to a local source may display quite strange and unexpected large-time behaviour. For example, it is known that the problem 
\[
 u_t=u^p (\Lap u + u), \qquad u\amrand=0, \qquad u(\cdot,0)=0
\]
in bounded domains $\Om\sub\R^n$ with $\lambda(\Om)=1$ being the first Dirichlet eigenvalue of $-\Lap$ in $\Om$ 
admits oscillating solutions in the sense that $\limsup_{t\to\infty} \norm[L^\infty(\Om)]{u(\cdot,t)}=\infty$, $\liminf_{t\to\infty}\norm[L^\infty(\Om)]{u(\cdot,t)}=0$ if $p\ge 3$, see \cite{Wk_oscillating_degenerate}. This contrasts the expected stabilization behaviour which can be observed in related degenerate problems under a condition on the set of stationary solutions (see \cite{Wk_largetime_stability_degen,langlais_phillips}) or in the nondegenerate ($p=0$) case (see e.g. \cite{jendoubi,matano82,zelenjak}). 
At the same time, it differs from the alternative which likewise might naively be expected, that is convergence toward $\infty$ as $t\to \infty$ or blow-up in finite time, the latter most famously discovered in \cite{friedman_mcleod} for $\lambda(\Om)<1$ and $p=2$. 

Apart from degeneracies, another change in the equation that can alter the behaviour of solutions consists in the inclusion of a nonlocal term as source. For example, in the following equation involving gradient-dependent nonlocal terms, considered in \cite{Dlotko,Souplet}, nonlocality changes the overall appearance: In
\[
 u_t=\Lap u + u^m\bigg(\io |\na u|^2\bigg)^r
\]
under homogeneous Dirichlet or Neumann boundary conditions, with $r>0$, $m\geq 1$, pure gradient-blow-up cannot occur \cite[Thm 4.1]{Souplet}, whereas it is to be expected for sufficiently strong nonlinear contributions of the gradient in the corresponding local version of this equation \cite[Thm. 3.4]{Souplet}. 

Concerning further results on (non-degenerate) semilinear parabolic equations with nonlocal source terms, 
we refer to \cite{Souplet_nonloc_reac_diff}, \cite[Ch. V]{quittner_souplet} and references therein.

In degenerate equations with nonlocal sources, even 
in the case of nonlocal terms depending monotonically on $u$, 
the analysis may become rather involved, as 
can already be perceived from the rather strong assumptions that have been imposed on the initial data in order to prove existence
e.g. in \cite{LiXie} of solutions to
\[
 u_t = u^p \big( \Lap u + au\io u^q\big)
\]
for $p>1, q\geq 1$; they are such that monotonicity of the solution with respect to time follows. Under these assumptions, the authors then provide conditions for global existence or for blow-up of solutions, respectively. Typical results in the context of these equations are global existence, occurrence of blow-up and, occasionally, blow-up rates, see e.g. \cite{han_gao,deng_li_xie,deng_duan_xie,zhong}. 

The Dirichlet problem \eqref{eq:prob}, thus incorporating two effects which are quite delicate already on their own, 
has been investigated in \cite{klw1} in bounded domains $\Om\sub\R^n$ and it has turned out that solutions to \eqref{eq:prob} are highly sensitive to their initial mass $\io u_0$ being larger or smaller than $1$: They blow up globally after a finite time or decay to zero, respectively, as shown in \cite{klw1}. In the borderline case of unit inital mass, however, their behaviour must be different from both, \cite[Cor. 3.2]{klw1}.

Problem \eqref{eq:prob} arises in the context of evolutionary game dynamics (\cite{JohnMaynardSmith,dugatkin_reeve}), more precisely in the framework of replicator dynamics (\cite{TaJ78,ScS83}), applied to an infinite dimensional continuous setting. For more information on the modelling, we refer to the appendix of \cite{klw1} 
and to \cite{KPXY10,KPY08} and the references therein. 

In this setting, anyhow, $\Om$ is a set of ``strategies'' and, in the manner of a probability density, $u$ gives the relative frequency with which they are pursued in the population of ``players'' considered -- and accordingly the case of highest interest from an applications viewpoint is that of $\io u_0=1$.

Let us mention some results concerning explicit examples of solutions to \eqref{eq:prob} and solutions of a particular form: 
In \cite{KPY08} and \cite{PS09}, self-similar solutions to \eqref{eq:prob} were constructed in $\Om=\R$ and $\Om=\R^n$, respectively, and \cite{PV14} investigates self-similar solutions in $\Om=\R$ for a related model, where the Laplacian is perturbed by a time-dependent term involving first derivatives as well. 
In \cite{KPXY10,KP11}, stationary solutions of \eqref{eq:prob} were studied; however, it remained open if those accurately capture the long-term behaviour of solutions to \eqref{eq:prob}. 

A discussion of existence and qualitative properties of solutions to \eqref{eq:prob} for 
rather general initial data in bounded domains can be found in 
 the previous work \cite{klw1}, where the existence of solutions to \eqref{eq:prob} has been shown as limit of solutions to the actually non-degenerate parabolic problems
\begin{equation}\label{eq:epsprob}
 \uet = \ue\Delta \ue + \ue\min\set{\frac1\eps, \io |\na \ue|^2}, \qquad \ue\amrand =\eps, \qquad \ue(\cdot,0)=\une, 
\end{equation}
cf. Theorem \ref{thm:ex} below for a more precise statement.\\ 
As motivated above, the present article is devoted to a study of the case $\io u_0=1$. It has been shown in \cite{klw1} that corresponding solutions exist globally and that they satisfy $\io u(t) = 1$ for any $t\in[0,\infty)$. Hence neither of the kinds of qualitative behaviour witnessed for $\io u_0>1$ or $\io u_0<1$ is to be observed and one may hope for convergence to a nontrivial large-time limit. 
We will show that this actually is the case and, moreover, we can identify the limit as solution of the corresponding stationary problem:
\newpage
\begin{theorem}\label{thm:main} Let $\Om\sub\R^n$ be a bounded domain with smooth boundary. 
 Let $u_0\in\Wnezom$ satisfy (H1) - (H3) below and additionally $\io u_0=1$, 
 and let $u$ be a corresponding solution to \eqref{eq:prob} provided by Theorem \ref{thm:ex}. Let $\Phi$ denote the solution of $-\Laplace \Phi = 1$ in $\Om$, $\Phi\amrand=0$. Then 
\[
 \limtinf \norm[\Wnezom]{u(t) - \frac{\Phi}{\io \Phi}}=0.
\] 
\end{theorem}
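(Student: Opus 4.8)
The plan is to reduce the assertion to convergence of the single scalar quantity $D(t):=\io|\na u(t)|^2$ to the value $m:=1/\io\Phi$ of an associated minimization problem. Set $v^\star:=\Phi/\io\Phi$. Then $-\Lap v^\star\equiv m$ and $\io v^\star=1$, so $v^\star$ is a stationary solution of \eqref{eq:prob} of unit mass, and integration by parts gives $\io|\na v^\star|^2=m$. First I would show that $v^\star$ is the unique minimizer of $J(v):=\io|\na v|^2$ over $M:=\set{v\in\Wnezom:\ v\ge0,\ \io v=1}$ with $\min_M J=m$: expanding around $v^\star$ and using $-\Lap v^\star=m$ together with $\io(v-v^\star)=0$ yields, for every $v\in M$, the exact identity $J(v)-m=\io|\na(v-v^\star)|^2$. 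As the Dirichlet integral $\io|\na\cdot|^2$ is equivalent to the squared $\Wnezom$-norm (Poincaré inequality), and since $u(t)\ge0$, $\io u(t)=1$ make $v=u(t)$ admissible for all $t$, this identity shows that the theorem is \emph{equivalent} to the scalar convergence $D(t)\to m$ as $t\to\infty$.

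Next I would establish that $D$ is non-increasing. Because $u\bdry=0$ for all $t$ (so $u_t\bdry=0$), differentiating and inserting $u_t=u(\Lap u+D)$ from \eqref{eq:prob} gives, after one integration by parts and the identities $\io u\Lap u=-D$ and $\io u=1$,
\[
 \ddt D = -2\io u_t\Lap u = -2\io u(\Lap u+D)^2\le 0,
\]
the nonnegativity of the integrand being immediate from $u\ge0$; equivalently, this is the defect in the Cauchy--Schwarz inequality $D=\io u(-\Lap u)\le(\io u)^{1/2}(\io u(\Lap u)^2)^{1/2}$, which is an equality exactly when $-\Lap u$ is constant on $\set{u>0}$. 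Consequently $D$ decreases, and since $u(t)\in M$ forces $D(t)\ge m$, we obtain $D(t)\downto D_\infty\ge m$ together with the dissipation bound $\intninf\io u(\Lap u+D)^2\,dt=\tfrac12\big(D(0)-D_\infty\big)<\infty$. These manipulations would be carried out rigorously on the non-degenerate approximations \eqref{eq:epsprob} and then passed to the limit $\eps\downto0$.

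It remains to prove $D_\infty=m$, which is the heart of the matter. From the finite dissipation I would pick times $t_k\to\infty$ with $\io u(t_k)(\Lap u(t_k)+D(t_k))^2\to0$, i.e. $-\Lap u(t_k)\to D_\infty$ in $L^2(u(t_k)\,dx)$. Boundedness of $\set{u(t)}$ in $\Wnezom$ then yields, along a subsequence, $u(t_k)\wto w$ in $\Wnezom$ and $u(t_k)\to w$ in $\Lzom$, so $w\in M$ and $J(w)\le D_\infty$. The difficulty is that degeneracy permits a priori stationary solutions supported on proper subdomains $\Om'\subsub\Om$, namely torsion functions $w=c\Phi_{\Om'}$ with $-\Lap w=c$ on $\Om'$; by strict monotonicity of torsional rigidity these have energy $1/\int_{\Om'}\Phi_{\Om'}>m$ and would be compatible with $D_\infty>m$. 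Excluding them is where positivity enters: since $u$ is the limit of the strictly positive solutions $\ue$ of \eqref{eq:epsprob} (with $\ue\bdry=\eps$) and $u_0$ satisfies (H1)--(H3), I would argue that $u(t)$ keeps full support, with interior lower bounds on compact subsets of $\Om$ that are uniform in $t$. This uniform non-degeneracy allows the weighted limit $-\Lap u(t_k)\to D_\infty$ to be transferred to the interior, giving $-\Lap w=D_\infty$ weakly throughout $\Om$; hence $w=D_\infty\Phi$, and $\io w=1$ forces $D_\infty=1/\io\Phi=m$.

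With $D_\infty=m$ established, the energy identity of the first step upgrades this to convergence of the \emph{entire} trajectory without passing to a subsequence: $\norm[\Wnezom]{u(t)-v^\star}^2=D(t)-m\to0$, which is exactly the claim. I expect the decisive and most delicate ingredient to be the uniform-in-time interior positivity of $u$: it is what rules out the competing sub-domain equilibria and what permits the weighted defect estimate to be converted into the limiting Poisson equation $-\Lap w=D_\infty$. Everything else reduces to the convexity identity, the Cauchy--Schwarz monotonicity computation, and standard weak compactness in $\Wnezom$.
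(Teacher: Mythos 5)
Your reduction of the theorem to the scalar convergence $D(t)\to m$ via the identity $J(v)-m=\io|\na(v-v^\star)|^2$ (valid for every $v\in\Wnezom$ with $\io v=1$) is correct, and it is in fact slicker than the paper's endgame, which proves weak convergence (Lemma \ref{lem:weakconv}) and norm convergence (Lemma \ref{lem:limitofnorm}) separately and combines them in the Hilbert space $\Wnezom$; likewise your monotonicity computation for $D$ is, modulo the regularization you mention, exactly the paper's Lemma \ref{lem:epsmon} transferred to $u$ in Lemma \ref{lem:mon}. The genuine gap is precisely where you yourself locate the heart of the matter: the identification $D_\infty=m$. Your argument rests on interior lower bounds for $u(t)$ on compact subsets of $\Om$ that are \emph{uniform in time}, and this is asserted, not proved; nothing in the construction delivers it. The only interior positivity available is time-local: the existence proof obtains it by comparison from below with $(x,t)\mapsto\phi(x)\frac{c(\Om')}{1+c(\Om')t}$, a bound that decays like $1/t$, which is exactly why local positivity is defined as $\frac1u\in L^\infty_{loc}(\Om\times(0,\infty))$ and not uniformly in $t$. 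The natural attempt to upgrade this — comparison with the time-independent subsolution $\delta\phi_{\Om'}$, $\delta\le m$, which formally works because $D(t)\ge m$ — founders on two obstacles: for the weak solution $u$ itself no comparison principle is available (even uniqueness of weak solutions is open, as the paper remarks), while at the approximate level one would need $\io|\na\ue(t)|^2$ bounded below by a positive constant uniformly in $\eps$ and $t$, which is not known since $\io\ue(t)$ is merely nonincreasing and $\le 1$ (mass conservation holds for $u$, not for $\ue$, and $\ue\to u$ only locally uniformly in time). So the decisive step of your proposal is a claim of essentially the same depth as the theorem itself.

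The paper closes this step by a different device that needs only time-local positivity and (H2). Assuming $A:=\lim_t D(t)>m$, it chooses $\Om'\subsub\Om$ whose torsion function $\phi$ (extended by zero) satisfies $\io\phi>1/A$, inserts $\phii=\frac{\phi}{u}\chi_{[0,T]}$ as a test function (admissible by Remark \ref{rem:lsgdef}~ii) and local positivity on the compact set $\supp\phi\times[0,T]$), and after an integration by parts obtains
\begin{equation*}
 \io\phi\ln u(T)\ \ge\ \io\phi\ln u_0+\intnT\big(A\io\phi-1\big),
\end{equation*}
whose right-hand side grows linearly in $T$, while the left-hand side is bounded above by $\norm[\infty]{\phi}\io u(T)=\norm[\infty]{\phi}$ because $\ln u\le u$ — a contradiction. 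Note how the region where $u$ is small, which is the whole difficulty in your approach, is rendered harmless here: it only makes $\ln u(T)$ more negative, strengthening the upper bound, and the lower bound involves $\ln u_0$, which is controlled by (H2) and the compact support of $\phi$. If you replace your uniform-positivity step by this logarithmic test-function argument, the remainder of your proof (the convexity identity and the monotonicity of $D$) goes through and yields the theorem along slightly more streamlined lines than the paper's own Section \ref{sec:conv}.
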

\begin{remark}
 (H1)-(H3) are those assumptions concerning regularity, positivity and behaviour close to the boundary under which existence of a solution has been proven in \cite{klw1}.\\ 
 The solutions provided by Theorem \ref{thm:ex} are solutions that have been obtained as limit of solutions to \eqref{eq:epsprob} in the sense made precise in Theorem \ref{thm:ex}. 
 In stating the theorem for these solutions, we restrict ourselves to the class of solutions which have been shown to exist, rather than striving for the full generality of all locally positive weak solutions in the sense of Definition \ref{def:weaksoln}. 
 Whether there is a difference at all, depends on the question of uniqueness of solutions, which we do not want to pursue here.
\end{remark}

In Section \ref{sec:known} let us first review results from \cite{klw1} and give definitions, some of which we have already referred to. 
Along the way, we will observe that in our particular case of unit initial mass, the gradients of solutions $\ue$ of \eqref{eq:epsprob} have nonincreasing $\Lzom$-norms. This property can be used to slightly strengthen statements from \cite{klw1} and can be transferred to solutions $u$ of \eqref{eq:prob} provided by Theorem \ref{thm:ex}. We will undertake this transfer in Section \ref{sec:mon} to obtain one of the cornerstones for the proof of Theorem \ref{thm:main}.  
After that, in Section \ref{sec:min}, we turn our attention to a certain minimization problem
that is intimately 
connected with the identification of the large-time limit 
and will as well become important in deriving convergence at all. 
Finally, in Section \ref{sec:conv}, we combine the observations from Sections \ref{sec:mon} and \ref{sec:min} and give a proof of Theorem \ref{thm:main}. 
It is the fortunate combination of decrease of the Dirichlet integral with the conservation of total mass that makes it possible to capture the asymptotics in this case that lies so subtly between that of convergence to a trivial solution and that of mere non-global existence.

\section{Definitions and known results}\label{sec:known}

Let us begin by introducing a function that will appear throughout the article, and a norm that can be defined with its help.
\begin{defn}\label{def:phinorm}\label{def:Phi}
 Let $\Phi$ be the solution of 
\[
 \Delta\Phi=-1, \quad\mbox{in }\Om; \qquad \Phi\amrand=0.
\]

 For a measurable function $v\col \Om\to\R$ define
\[
 \norm[\Phi,\infty]{v} = \esssup_{\Om} | \frac v \Phi|.
\]
\end{defn}

Controlling this norm of a function means providing estimates for its value at a point $x\in\Om$ in terms of the distance $d(x)=dist(x,\pO)$ of $x$ to the boundary of the domain, because there exist $c_0,c_1>0$ such that $c_0 d\le \Phi\le c_1 d$. In the construction of solutions as performed in \cite{klw1}, positivity of initial data and their behaviour close to the boundary play a role; more precisely we require the following:

 \bit
\item[(H1)]
  $u_0 \in L^\infty(\Omega) \cap W_0^{1,2}(\Omega)$ and
\item[(H2)]
  $u_0 \ge 0$ and $\frac{1}{u_0} \in L^\infty_{loc}(\Omega)$ and
\item[(H3)]
 $\norm[\Phi,\infty]{u_0}<\infty$.
\eit

Throughout the article, let us fix $u_0$ with the properties (H1)-(H3) and $\io u_0=1$.

\begin{lemma}\label{lem:approxinit}
 Let $L>\max\set{\io|\na u_0|^2, \norm[\Phi,\infty]{u_0}}$. Then there exist $\eps_0>0$ and a family $(\une)_{\eps\in(0,\eps_0)}\subset C^3(\Ombar)$ with the following properties:
\begin{align} &\uen \geq \eps\label{geqeps}, \qquad \uen\bdry=\eps,\\
 &\mbox{for any compact set } K\subset \Om \mbox{ there is }C_K>0 \mbox{ such that } \liminf_{\eps\searrow 0} \inf_K \uen \geq C_K,\label{Kptbelow}\\
 &\uen\to \ue \quad \mbox{in } W^{1,2}(\Om) \qquad \mbox{as }\eps\searrow 0,\label{wez}\\
 &\limsup_{\eps\searrow 0} \norm[\Phi,\infty]{\uen-\eps}\leq L,\qquad \Laplace \uen\bdry=-\io |\na \uen|^2,\qquad \io \uen=\io u_0\label{phinorm_lapbdry_initint}.
\end{align}
\end{lemma}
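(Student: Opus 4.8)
The plan is to construct $\uen$ explicitly in the form
\[
\uen = \eps + v_\eps + a_\eps\,\Phi\,\eta_\eps,
\]
where $v_\eps\in C_c^\infty(\Om)$, $v_\eps\ge0$, is an interior smoothing of $u_0$, where $\eta_\eps\in C^\infty(\Ombar)$ is a cutoff equal to $1$ on a thin boundary layer $\set{d<\delta_\eps/2}$ and supported in $\set{d<\delta_\eps}$ (with $\delta_\eps\downto0$ chosen smaller than the distance of $\supp v_\eps$ to $\pO$), and where $a_\eps\ge0$ is a scalar fixed below. The roles are cleanly separated: the constant $\eps$ gives the boundary value $\uen\bdry=\eps$ and, together with $v_\eps\ge0$ and $a_\eps\Phi\eta_\eps\ge0$, the lower bound $\uen\ge\eps$; the term $v_\eps$ carries the approximation of $u_0$; and $a_\eps\Phi\eta_\eps$ is a boundary corrector enforcing compatibility. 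Since $v_\eps$ vanishes near $\pO$ and $\eta_\eps\equiv1$ there, $\Lap\uen\bdry=a_\eps\Lap\Phi\bdry=-a_\eps$, and as $\supp v_\eps$ and $\supp\eta_\eps$ are disjoint, $\io|\na\uen|^2=\io|\na v_\eps|^2+a_\eps^2\io|\na(\Phi\eta_\eps)|^2$. Thus the required identity $\Lap\uen\bdry=-\io|\na\uen|^2$ reduces to the scalar self-consistency equation $a_\eps=\io|\na v_\eps|^2+a_\eps^2\io|\na(\Phi\eta_\eps)|^2$.

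For the interior part I would mollify $u_0$ with a nonnegative kernel (preserving $u_0\ge0$) and multiply by a cutoff vanishing near $\pO$, obtaining $v_\eps\in C_c^\infty(\Om)$. That $v_\eps\to u_0$ in $W^{1,2}(\Om)$ is where (H3) enters: the only dangerous gradient term is $u_0\na\chi$ on the layer $\set{d<\delta}$, where $|\na\chi|\lesssim\delta^{-1}$ but $u_0\le\norm[\Phi,\infty]{u_0}\,\Phi\lesssim\delta$, so its $\Lzom$-norm is $\lesssim\delta^{1/2}\to0$. The one-sided bound survives mollification because $\Phi$ is superharmonic ($\Lap\Phi=-1\le0$), so $\norm[\Phi,\infty]{v_\eps}\le\norm[\Phi,\infty]{u_0}$; moreover $\io|\na v_\eps|^2\to\io|\na u_0|^2$ and $\io v_\eps\to\io u_0=1$. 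The compact lower bound \eqref{Kptbelow} comes from (H2): on a compact $K$, $u_0$ is bounded below by some $c_K>0$ on a neighborhood, a bound inherited by the mollification and undisturbed by the cutoff (which equals $1$ on $K$ for small $\eps$), whence $\uen\ge\eps+v_\eps\ge c_K$ there.

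The point of localizing $\Phi$ to the thin layer is that $P_\eps:=\io|\na(\Phi\eta_\eps)|^2\lesssim\delta_\eps\downto0$, so with $Q_\eps:=\io|\na v_\eps|^2\to\io|\na u_0|^2$ the quadratic $P_\eps a^2-a+Q_\eps=0$ has, for small $\eps$, discriminant $1-4P_\eps Q_\eps>0$ and the root $a_\eps=\frac{2Q_\eps}{1+\sqrt{1-4P_\eps Q_\eps}}\to\io|\na u_0|^2$. By construction $a_\eps=\io|\na\uen|^2$, so compatibility holds; in the layer $\frac{\uen-\eps}{\Phi}=a_\eps\eta_\eps\le a_\eps$ while off the layer $\frac{\uen-\eps}{\Phi}=\frac{v_\eps}{\Phi}$, so together with the previous paragraph $\limsup_{\eps\downto0}\norm[\Phi,\infty]{\uen-\eps}\le\max\set{\norm[\Phi,\infty]{u_0},\io|\na u_0|^2}<L$, the strict inequality being exactly the hypothesis on $L$. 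It remains to pin the mass $\io\uen=\eps|\Om|+\io v_\eps+a_\eps\io\Phi\eta_\eps$ to $\io u_0=1$; since $\eps|\Om|$ and $a_\eps\io\Phi\eta_\eps\lesssim\delta_\eps^2$ tend to $0$ and $\io v_\eps\to1$, only a small adjustment is needed.

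The main obstacle is precisely this last, coupled bookkeeping step: rescaling $v_\eps$ by a factor $c$ changes $Q_\eps$, hence $a_\eps$, hence the layer mass $a_\eps\io\Phi\eta_\eps$, so the mass condition and the self-consistency equation must be solved simultaneously. I would close this by a one-parameter continuity argument: for $c$ near $1$ the quadratic yields $a_\eps(c)$ continuously, the total mass $M_\eps(c)=\eps|\Om|+c\io v_\eps+a_\eps(c)\io\Phi\eta_\eps$ is continuous and strictly increasing in $c$ with $M_\eps(1)\to1$, so the intermediate value theorem selects $c_\eps\to1$ with $M_\eps(c_\eps)=1$. As all correction terms vanish with $\eps$, this adjustment leaves the convergence $\uen\to u_0$ in $W^{1,2}(\Om)$, the $\Phi$-bound and the lower bounds intact, and since $\Phi\in C^\infty(\Ombar)$ by elliptic regularity, $\uen\in C^\infty(\Ombar)\subset C^3(\Ombar)$, completing the construction.
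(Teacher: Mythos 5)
You should know at the outset that the paper does not actually prove this lemma: its entire ``proof'' is a one-sentence citation to \cite[Lemma 2.4]{klw1}, so your argument cannot be compared with the paper's line by line --- what you have written is a self-contained substitute for a construction the paper outsources. Taken on its own terms, your construction is correct and well adapted to the statement. The decomposition $\une=\eps+v_\eps+a_\eps\Phi\eta_\eps$ cleanly separates the requirements: the constant gives $\une\bdry=\eps$ and the lower bound, the mollified interior part carries the $W^{1,2}$-approximation and \eqref{Kptbelow}, and the boundary corrector enforces compatibility. Disjointness of $\supp v_\eps$ and $\supp\eta_\eps$ reduces $\Lap\une\bdry=-\io|\na\une|^2$ to the scalar equation $P_\eps a^2-a+Q_\eps=0$, which is solvable with $a_\eps\to\io|\na u_0|^2$ precisely because $P_\eps\lesssim\delta_\eps\to0$; and your bookkeeping explains why the hypothesis reads $L>\max\set{\io|\na u_0|^2,\norm[\Phi,\infty]{u_0}}$, since the layer contributes $\Phi$-norm $\approx a_\eps\approx\io|\na u_0|^2$ while the interior part contributes $\approx\norm[\Phi,\infty]{u_0}$. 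The closing intermediate-value argument for the mass is also sound: $M_\eps(c)$ is continuous and strictly increasing (both $c\io v_\eps$ and $a_\eps(c)$ increase in $c$), $M_\eps(1)\to1$, and $\partial_c M_\eps\ge\io v_\eps$ is bounded below, so $c_\eps$ exists and tends to $1$, leaving all other properties intact.

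One step should be stated more carefully in a final write-up: the claim that superharmonicity of $\Phi$ alone yields $\norm[\Phi,\infty]{v_\eps}\le\norm[\Phi,\infty]{u_0}$. The zero extension of $\Phi$ to $\R^n$ is \emph{not} superharmonic (the mean value inequality fails at points just outside $\pO$, where the extension vanishes but its ball averages are positive), so mollification of the inequality $u_0\le\norm[\Phi,\infty]{u_0}\,\Phi$ is only legitimate where the mollification ball stays inside $\Om$. You therefore need the mollification radius $\sigma_\eps$ to be strictly smaller than the distance from $\supp\chi$ to $\pO$; since $v_\eps$ vanishes off $\supp\chi$, this suffices, and it is consistent with the ordering of small parameters you already impose (mollification scale below the cutoff scale of $\chi$, and $\delta_\eps$ below the distance of $\supp v_\eps$ to $\pO$). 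This is a matter of making the hierarchy of scales explicit rather than a gap in the idea. Finally, note that \eqref{wez} as printed ($\une\to\ue$) is evidently a typo for $\une\to u_0$, which is what your argument establishes.
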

\begin{proof}
 A proof of this approximation property is sketched in \cite[Lemma 2.4]{klw1}. 
\end{proof}
Corresponding to initial data $u_0$ given above, in the following let us fix $\eps_0>0$ and a family $\set{\une}_{\eps\in(0,\eps_0)}$ with the properties described in Lemma \ref{lem:approxinit}.

\begin{lemma}\label{lem:exapprox}
 For sufficiently small $\eps\in(0,1)$, the problem \eqref{eq:epsprob} with initial data as just described has a unique classical solution in $\Om\times(0,\infty)$.
\end{lemma}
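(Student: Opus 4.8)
The plan is to treat \eqref{eq:epsprob} as a quasilinear parabolic problem that is rendered uniformly parabolic by a positive pointwise lower bound on $\ue$, while the nonlocal source coefficient is handled by a fixed-point argument. The two features to overcome are that the principal coefficient in $\ue\Laplace\ue$ is the unknown itself, so that ellipticity degenerates if $\ue$ approaches zero, and that the zeroth-order source depends nonlocally on $\io|\na\ue|^2$. The cutoff $\min\set{\frac1\eps,\cdot}$ is precisely what keeps this nonlocal coefficient bounded, by $\frac1\eps$, and Lipschitz in the relevant norms, which is what makes the fixed point tractable. I would first fix $\eps$ so small that $\io|\na\une|^2<\frac1\eps$; by \eqref{wez} this holds once $\eps$ is small, and it guarantees that the first-order compatibility relation $\Laplace\une\bdry=-\io|\na\une|^2$ from \eqref{phinorm_lapbdry_initint} is the one obtained by differentiating $\ue\bdry=\eps$ in time, so that classical (parabolic Hölder) solvability is not obstructed at $t=0$.

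The crucial a priori estimates are two-sided pointwise bounds. For the lower bound I would use that the constant $\eps$ is a subsolution: since $\min\set{\frac1\eps,\io|\na\ue|^2}\ge0$ and $\Laplace\eps=0$, the spatially constant function $\eps$ satisfies the differential inequality with the correct sign, agrees with the boundary data, and lies below $\une\ge\eps$. Writing $z=\ue-\eps$ one finds $z_t-\ue\Laplace z-m(t)\,z=\eps\, m(t)\ge0$ with $m(t)=\min\set{\frac1\eps,\io|\na\ue|^2}\ge0$, and after a shift by $e^{-t/\eps}$ (to make the zeroth-order coefficient nonnegative) the parabolic minimum principle yields $\ue\ge\eps$ throughout. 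For the upper bound, the spatially constant function $t\mapsto(\max_\Ombar\une)\,e^{t/\eps}$ is a supersolution, since its Laplacian vanishes and the source contributes at most $\frac1\eps$ times itself; comparison gives $\ue\le(\max_\Ombar\une)\,e^{t/\eps}$ on every finite time interval. Together these confine $\ue$ to a compact subinterval of $(0,\infty)$ on each $[0,T]$, so the coefficient $\ue$ of $\Laplace\ue$ stays bounded away from $0$ and $\infty$ and the equation is uniformly parabolic there.

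For local existence I would run a contraction mapping in a parabolic Hölder ball $C^{2+\alpha,1+\alpha/2}(\Ombar\times[0,T])$. Given an admissible $v$ bounded below by $\eps$, freeze the coefficients by setting $b_v(t)=\min\set{\frac1\eps,\io|\na v(\cdot,t)|^2}$ and solve the linear parabolic problem $w_t=v\Laplace w+b_v(t)\,w$ with $w\bdry=\eps$, $w(\cdot,0)=\une$; since $v$ is Hölder and bounded below, $b_v$ is Hölder in $t$, and linear parabolic Schauder theory provides a unique $w\in C^{2+\alpha,1+\alpha/2}$ together with estimates. Using the pointwise bounds above one checks that $v\mapsto w$ preserves a suitable closed set and that for $T$ small it is a contraction---the nonlocal coefficient depends on $v$ in a Lipschitz way thanks to the $\frac1\eps$-cutoff---so its fixed point is the desired local classical solution. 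Uniqueness follows by subtracting two solutions, writing the difference equation with the bounded coefficients furnished by the two-sided bounds and the Lipschitz nonlocal term, and applying Gronwall's inequality (or, equally, the local contraction).

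Finally, to pass to a global-in-time solution on $\Om\times(0,\infty)$, I would argue by continuation: on the maximal existence interval $[0,T_{\max})$ the two-sided bounds hold with constants that remain finite as $t\uparrow T_{\max}$ whenever $T_{\max}<\infty$, and since the nonlocal coefficient is bounded by $\frac1\eps$, interior and boundary Schauder estimates bound the $C^{2+\alpha,1+\alpha/2}$-norm of $\ue$ up to $T_{\max}$. Thus $\ue(\cdot,T_{\max})$ is an admissible initial datum, the local construction restarts beyond $T_{\max}$, and maximality is contradicted; hence $T_{\max}=\infty$. The main obstacle throughout is the interplay of degeneracy and nonlocality: the argument only closes because the positive lower bound $\ue\ge\eps$ keeps the quasilinear operator uniformly parabolic, and because the artificial cutoff turns the nonlocal source into a bounded, Lipschitz perturbation rather than a genuinely unbounded one.
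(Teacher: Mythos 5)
The paper itself contains no proof of this lemma: its ``proof'' is the single citation \cite[Lemma 2.5]{klw1}, so your self-contained argument cannot literally match it. Structurally, what you propose is the standard strategy (and almost certainly the one carried out in the cited work): the cutoff $\min\set{\frac1\eps,\cdot}$ makes the nonlocal coefficient bounded by $\frac1\eps$ and Lipschitz, the lower bound $\eps$ restores uniform parabolicity, local existence follows from a Banach fixed point combined with linear Schauder theory, uniqueness from a Gronwall argument, and globality from continuation. Your observation that one should take $\eps$ so small that $\io|\na\une|^2<\frac1\eps$, which makes the condition $\Laplace\une\bdry=-\io|\na\une|^2$ from \eqref{phinorm_lapbdry_initint} the correct first-order compatibility condition for the cutoff problem (using \eqref{wez}), is exactly the right explanation of the phrase ``for sufficiently small $\eps$''. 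The comparison arguments (subsolution $\eps$, supersolution $(\max_{\Ombar}\une)\,e^{t/\eps}$, exponential shift to absorb the zeroth-order coefficient of size $\frac1\eps$) are correct as stated and give uniform parabolicity on every compact time interval.

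Two steps are stated too loosely to stand as written. First, in the continuation argument you claim that the $L^\infty$ bounds plus boundedness of the nonlocal coefficient let ``interior and boundary Schauder estimates'' control the $C^{2+\alpha,1+\alpha/2}$-norm up to $T_{\max}$. Schauder estimates require a H\"older continuous diffusion coefficient, but here the coefficient of $\Laplace\ue$ is $\ue$ itself, which at that stage is only known to be bounded between $\eps$ and $(\max_{\Ombar}\une)\,e^{T_{\max}/\eps}$. You must first invoke a regularity result for uniformly parabolic equations in nondivergence form with merely bounded measurable coefficients (Krylov--Safonov type), which yields a H\"older bound on $\ue$; only then does the Schauder bootstrap close and allow the restart. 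Second, the contraction cannot be run directly in the full $C^{2+\alpha,1+\alpha/2}$-norm, since the Lipschitz estimate for $v\mapsto w$ coming from Schauder theory carries no small factor in that topology; the standard fix is to work on a closed ball of $C^{2+\alpha,1+\alpha/2}$ with fixed initial trace and contract in a lower-order norm, using that such a ball is complete in the weaker topology and that differences vanishing at $t=0$ gain a factor $T^\theta$ by interpolation. With these two repairs your argument is complete, and neither changes its overall architecture.
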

\begin{proof}
 \cite[Lemma 2.5]{klw1}.
\end{proof}
From now on, we will alway use $\ue$ to denote this solution.

Estimating it from below is rather straightforward.
\begin{lemma}\label{lem:greatereps}
 The solution $\ue$ from Lemma \ref{lem:exapprox} satisfies $\ue\ge \eps$ on $\Om\times(0,\infty)$.
\end{lemma}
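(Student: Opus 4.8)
The plan is to show that $\ue\ge\eps$ via a comparison argument, exploiting the structure of the equation \eqref{eq:epsprob}. First I would observe that the degenerate problem \eqref{eq:epsprob} is already regularized: the solution $\ue$ is a classical solution by Lemma \ref{lem:exapprox}, and the coefficient $\min\set{\frac1\eps,\io|\na\ue|^2}$ is a bounded, nonnegative quantity depending only on time (through $\ue(\cdot,t)$), not on the spatial point. Denoting $m(t)=\min\set{\frac1\eps,\io|\na\ue(\cdot,t)|^2}\ge 0$, the equation reads $\uet=\ue\Laplace\ue+m(t)\ue$, which is a linear parabolic equation in $\ue$ once the coefficient $\Laplace\ue$ and the function $m(t)$ are regarded as given. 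The natural candidate for the subsolution is the constant $\eps$: on the parabolic boundary we have $\ue(\cdot,0)=\une\ge\eps$ by \eqref{geqeps} and $\ue\bdry=\eps$ by the boundary condition in \eqref{eq:epsprob}, so it remains to verify that $\eps$ is a subsolution in the interior.

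The key computation is to set $w=\ue-\eps$ and check that $w\ge 0$. I would write the equation satisfied by $w$, namely
\[
 w_t = \ue\Laplace\ue + m(t)\ue = \ue\Laplace w + m(t)\ue,
\]
using $\Laplace\ue=\Laplace w$ since $\eps$ is constant. Thus $w_t - \ue\Laplace w - m(t)w = m(t)\eps\ge 0$, and $w$ satisfies a linear parabolic inequality $w_t - \ue\Laplace w - m(t)w\ge 0$ with $w=0$ on $\dOm$ and $w\ge 0$ at $t=0$. Since $\ue>0$ (the operator is genuinely parabolic on the region where the argument runs) and $m(t)$ is bounded on finite time intervals, the maximum principle for linear parabolic equations applies and yields $w\ge 0$, that is, $\ue\ge\eps$, on $\Om\times(0,T)$ for every $T>0$, hence on all of $\Om\times(0,\infty)$.

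The main obstacle, and the point requiring care, is that the coefficient $\ue$ multiplying $\Laplace w$ is itself the unknown, so the parabolicity of the linearized operator is only guaranteed where $\ue$ is already known to be positive. One clean way to handle this is a bootstrap or continuity argument: define $T^\ast=\sup\set{t>0\col \ue\ge\eps \text{ on }\Om\times[0,t]}$. On $[0,T^\ast)$ the coefficient $\ue\ge\eps>0$ is uniformly elliptic, so the maximum principle argument above is fully justified and gives $\ue\ge\eps$ up to $T^\ast$; by continuity of the classical solution this persists at $T^\ast$, and if $T^\ast<\infty$ one could re-run the argument on a slightly larger interval, contradicting maximality. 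Hence $T^\ast=\infty$ and the bound holds globally. Since $m(t)\ge 0$ makes the source term $m(t)\eps$ nonnegative, the inequality is strict in the right direction and no sign issues arise; the boundedness of $\ue$ and its derivatives on compact time intervals, guaranteed by classical solvability, supplies the regularity needed to invoke the parabolic maximum principle without further ado.
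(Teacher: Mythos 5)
Your proposal is correct and takes essentially the same route as the paper: the paper's proof of Lemma \ref{lem:greatereps} is the one-line remark that the claim ``follows by comparison'' using \eqref{geqeps} and the boundary condition $\ue\bdry=\eps$, and your computation with $w=\ue-\eps$, the parabolic minimum principle (with the bounded coefficient $m(t)\le\frac1\eps$), and the continuity argument securing parabolicity of the operator supplies exactly the details the paper leaves implicit. The only point to tighten is the bootstrap: you should note that continuity of the classical solution yields $\ue\ge\eps/2>0$ (not $\ge\eps$) on a slightly larger time interval, both at $t=0$ and beyond $T^\ast$, which is enough for parabolicity there, after which the minimum principle restores the full bound $\ue\ge\eps$ and the contradiction with maximality goes through.
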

\begin{proof}
 With the aid of \eqref{geqeps} and the boundary condition $\ue\amrand =\eps$, this follows by comparison.
\end{proof}

Given bounds on the spatial gradient, we can estimate $\ue$ also from above; this time the bound does not depend on $\eps$. 
\begin{lemma}\label{lem13}
  For all $M>0$ and $C_0>0$ there exists $C_1(M,C_0)>0$ with the following property: 
  If  
  \[
	u_{0\eps} \le M \quad \mbox{in } \Omega \qquad \mbox{and} \qquad \int_0^T \io |\nabla \ue|^2 \le C_0
  \]
  holds for some $\eps\in (0,\eps_0)$ and $T \in (0,\infty]$ then we have
  \(
	\ue \le C_1(M,C_0)$  in $\Omega \times (0,T).
  \)
\end{lemma}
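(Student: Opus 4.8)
The plan is to dominate $\ue$ by a spatially homogeneous supersolution and to read off the constant from the time integral of the nonlocal coefficient. Write $a(t):=\min\set{\frac1\eps,\io|\na\ue(\cdot,t)|^2}$, so that $\ue$ solves $\uet=\ue\Delta\ue+a(t)\,\ue$ and, by hypothesis, $\int_0^t a(s)\,ds\le\int_0^T\io|\na\ue|^2\le C_0$ for every $t\le T$, since $0\le a(s)\le\io|\na\ue(\cdot,s)|^2$. Accordingly I would set $\bar u(t):=M\exp\big(\int_0^t a(s)\,ds\big)$, which is independent of $x$, satisfies $\bar u\le Me^{C_0}$ on $(0,T)$, and solves $\bar u_t=\bar u\,\Delta\bar u+a(t)\,\bar u$ exactly (the diffusion term vanishing because $\Delta\bar u=0$). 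From \eqref{geqeps} and $u_{0\eps}\le M$ one has $M\ge\sup\une\ge\eps$, whence $\bar u\ge M\ge\ue$ on the parabolic boundary $(\Om\times\set0)\cup(\pO\times(0,T))$, using $\une\le M=\bar u(0)$ and $\ue\amrand=\eps\le M\le\bar u$.

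Next I would carry out the comparison. The difference $w:=\ue-\bar u$ satisfies $w_t=\ue\,\Delta w+a(t)\,w$, since $\Delta\bar u=0$ gives $\ue\Delta\ue=\ue\Delta w$ and the two source terms combine into $a(t)\,w$. By Lemma \ref{lem:greatereps} the coefficient $\ue$ of $\Delta w$ is bounded below by $\eps$, and on each strip $\Om\times(0,T')$ with $T'<T$ it is bounded above by a finite constant, by continuity of the classical solution $\ue$ on the compact set $\Ombar\times[0,T']$; thus the equation for $w$ is uniformly parabolic there (this upper bound may depend on $\eps$ and $T'$, which introduces no circularity, as the final constant $Me^{C_0}$ will not). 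The only feature obstructing an immediate maximum principle is the zeroth-order term $a(t)\,w$ carrying the unfavourable sign $a\ge0$.

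To remove it, I would pass to $\zeta:=e^{-\int_0^t a}\,w$, which transforms the equation into $\zeta_t=\ue\,\Delta\zeta$, a linear parabolic equation with no zeroth-order term and coefficient $\ue\ge\eps$. The weak maximum principle then forces $\zeta$ to attain its maximum over $\Ombar\times[0,T']$ on the parabolic boundary, where $\zeta=e^{-\int_0^t a}(\ue-\bar u)\le0$; hence $\zeta\le0$, i.e. $\ue\le\bar u\le Me^{C_0}$ throughout $\Om\times(0,T')$. Letting $T'\uparrow T$ yields the assertion with $C_1(M,C_0):=Me^{C_0}$, a constant independent of both $\eps$ and $T$, as required.

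I expect the main obstacle to be precisely this interplay of degeneracy and reaction: the $\Delta$-term degenerates where $\ue$ vanishes, which a priori rules out a bare comparison argument, while the nonlocal source contributes a reaction term of the wrong sign for the maximum principle. Both difficulties are defused here—the former by the strict lower bound $\ue\ge\eps$ of Lemma \ref{lem:greatereps}, which keeps the problem uniformly parabolic for fixed $\eps$, and the latter by the exponential substitution, which absorbs $a(t)$ into an integrating factor. An equivalent route I would keep in reserve is to track $m(t):=\max_{\Ombar}\ue(\cdot,t)$ directly and to verify the differential inequality $m'(t)\le a(t)\,m(t)$ for a.e.\ $t$ (the diffusion term being $\le0$ at an interior spatial maximum, where $\nabla\ue=0$ and $\Delta\ue\le0$, while on $\pO$ one has $\ue=\eps$), after which Gronwall's inequality produces the same constant $Me^{C_0}$.
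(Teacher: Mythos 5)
Your argument is correct and is essentially the comparison argument the paper invokes: the paper itself only cites \cite[Lemma 2.6]{klw1}, noting it was ``obtained by a comparison argument there,'' and your proof supplies exactly that argument, comparing $\ue$ with the spatially homogeneous supersolution $M\exp\big(\int_0^t \min\set{\frac1\eps,\io|\na\ue|^2}\big)$ and concluding $\ue\le Me^{C_0}$. The exponential substitution neutralizing the zeroth-order term and the reduction to a uniformly parabolic linear equation via $\ue\ge\eps$ are sound, so no gap remains.
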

\begin{proof}
 This is Lemma 2.6 of \cite{klw1}. It has been obtained by a comparison argument there.
\end{proof}

\begin{lemma}\label{lem:massdecrease}
For any $t>0$ we have $\ddt \io \ue(t)\leq 0$ and $\io \ue(t) \leq 1$.
\end{lemma}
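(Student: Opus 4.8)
The plan is to test the equation against the constant $1$. Setting $M(t) := \io \ue(t)$ and abbreviating $g(t) := \io |\na \ue|^2 \ge 0$ together with the nonlocal factor $m_\eps(t) := \min\{\tfrac1\eps, g(t)\}$, differentiation under the integral and \eqref{eq:epsprob} give
\[
 M'(t) = \io \uet = \io \ue \Lap \ue + m_\eps(t)\, M(t).
\]
First I would integrate the diffusion term by parts. The boundary condition $\ue\bdry = \eps$ combined with Lemma \ref{lem:greatereps}, which yields $\ue \ge \eps$ in $\Om$ with equality on $\pO$ and hence $\partial_\nu \ue \le 0$ there (the outer normal derivative at a boundary minimum is nonpositive), makes the boundary contribution $\eps \ido \partial_\nu \ue$ nonpositive. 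Thus $\io \ue \Lap \ue \le -g(t)$, and using the elementary bounds $m_\eps(t) \le g(t)$ and $M(t) \ge 0$ one arrives at the differential inequality
\[
 M'(t) \le -g(t) + m_\eps(t)\, M(t) \le g(t)\bigl(M(t) - 1\bigr).
\]

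At this stage the two assertions are coupled: the monotonicity $M' \le 0$ would follow at once if $M \le 1$ were already known, but that is exactly the second claim. I expect this apparent interdependence to be the main obstacle, and I would break it by reading the displayed inequality as a linear Gronwall estimate for $N(t) := M(t) - 1$, namely $N'(t) \le g(t)\,N(t)$ with $N(0) = 0$. The initial condition comes from $\io \une = \io u_0 = 1$, guaranteed by the last identity in \eqref{phinorm_lapbdry_initint} together with the standing assumption $\io u_0 = 1$. Since $g$ is continuous and nonnegative, multiplying by the integrating factor $e^{-\int_0^t g}$ shows that $N(t)\,e^{-\int_0^t g}$ is nonincreasing; as it vanishes at $t=0$, it stays nonpositive, so $N(t) \le 0$, that is $M(t) \le 1$ for every $t > 0$.

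With the bound $M(t) \le 1$ secured, I would then return to the first inequality $M'(t) \le -g(t) + m_\eps(t)\,M(t)$ and estimate $m_\eps(t)\,M(t) \le m_\eps(t) \le g(t)$, using $0 \le M(t) \le 1$ and $m_\eps(t) \le g(t)$. This gives $M'(t) \le 0$ directly, completing both claims. The conceptual point of the argument is precisely the decoupling: one first extracts the quantitative bound $M \le 1$ via Gronwall and only afterwards deduces the sharper monotonicity, rather than attempting to establish the two statements at the same time.
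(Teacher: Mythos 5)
Your proof is correct and follows essentially the same route as the paper: integrate the equation, integrate by parts using $\na\ue\cdot\ny\le 0$ on $\pO$ (from Lemma \ref{lem:greatereps}), bound the nonlocal term by $\io|\na\ue|^2$ to reach $\ddt\io\ue \le \big(\io\ue-1\big)\io|\na\ue|^2$, and then apply a Gronwall/ODE comparison argument with $\io\uen=1$ to get $\io\ue\le 1$, from which $\ddt\io\ue\le 0$ follows. The only cosmetic difference is that the paper reads the monotonicity directly off the inequality $\ddt\io\ue\le\big(\io\ue-1\big)\io|\na\ue|^2\le 0$, whereas you return to the intermediate estimate with $m_\eps$; both are equivalent.
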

\begin{proof}
 Using \eqref{eq:epsprob} and integrating by parts we obtain 
\begin{align}\label{eq:ddtioue}
 \ddt \io \ue =& \io \ue\Delta \ue + \io \ue \min\set{\frac1\eps, \io |\na \ue|^2}\nn\\
 =& -\io |\na\ue|^2 + \ido \ue \na\ue\cdot \ny + \io \ue \min\set{\frac1\eps, \io |\na \ue|^2}\nn\\
\le& -\io |\na\ue|^2 +\io \ue \io |\na\ue|^2 \nn\\
=& \left(\io \ue-1\right) \io |\na\ue|^2 \qquad \mbox{ for } t>0,
\end{align}
where $\ny$ denotes the outer unit normal and where we used that $\na\ue \cdot \ny\le 0$ on $\dOm$ because of Lemma \ref{lem:greatereps}. Since we have $\io \ue(0)  \le 1$, an ODE comparison argument shows that $\io\ue(t)\le 1$ for all $t>0$ and hence by \eqref{eq:ddtioue} also $\ddt \io\ue(t)\leq 0$ for all $t>0$.
\end{proof}

\begin{lemma}\label{lem:epsmon}
The mapping defined by
\[
 [0,\infty)\ni t\mapsto\io |\na\ue(t)|^2
\]
is monotonically nonincreasing. In particular, for any $t>0$
\[
 \io |\na\ue(t)|^2 \le \io |\na\uen|^2.
\]
\end{lemma}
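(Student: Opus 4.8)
The plan is to differentiate the Dirichlet integral along the flow and to exploit the mass decrease established in Lemma \ref{lem:massdecrease}. Since $\ue$ is a classical solution with $C^3$ initial data on a smooth domain, parabolic regularity makes $\ue$ sufficiently smooth in $\Om\times(0,\infty)$ to justify differentiating under the integral sign, so that
\[
 \ddt \io |\na\ue|^2 = 2\io \na\ue\cdot\na\uet.
\]
First I would integrate by parts. The crucial observation at this stage is that the boundary datum $\ue\bdry=\eps$ is independent of time, hence $\uet=0$ on $\dOm$ and the boundary contribution drops out entirely, leaving
\[
 \ddt \io |\na\ue|^2 = -2\io \uet\,\Delta\ue.
\]

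Next I would use the equation itself to rewrite the Laplacian. Because $\ue\ge\eps>0$ by Lemma \ref{lem:greatereps}, I may divide \eqref{eq:epsprob} by $\ue$ to obtain $\Delta\ue = \frac{\uet}{\ue} - \min\set{\frac1\eps,\io|\na\ue|^2}$. Substituting this, and noting that the $\min$ term is constant in space and therefore factors out of the integral, I arrive at
\[
 \ddt \io |\na\ue|^2 = -2\io \frac{\uet^2}{\ue} + 2\min\set{\tfrac1\eps,\io|\na\ue|^2}\,\ddt\io\ue.
\]
Both summands are nonpositive: the first because $\ue>0$, and the second because the $\min$ is nonnegative while $\ddt\io\ue\le 0$ by Lemma \ref{lem:massdecrease}. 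This gives monotonicity on $(0,\infty)$, and continuity of $t\mapsto\io|\na\ue(t)|^2$ up to $t=0$ together with $\ue(\cdot,0)=\uen$ yields the displayed ``in particular'' bound.

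I expect the substantive idea to be precisely the step of replacing $\Delta\ue$ via the PDE: it converts the indefinite-looking quantity $\io\uet\,\Delta\ue$ into the manifestly nonnegative dissipation term $\io\uet^2/\ue$ plus a remainder that the mass decrease controls. This is exactly where the borderline nature of the unit-mass case enters, since for $\io\ue<1$ one would only gain, and the clean monotonicity relies on $\ddt\io\ue\le 0$ rather than on an exact conservation law. The remaining points---justifying the differentiation and the integration by parts up to the boundary---are routine given the classical regularity of $\ue$ and the smoothness of $\dOm$, so I do not anticipate a genuine obstacle there.
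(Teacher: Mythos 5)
Your proof is correct and takes essentially the same route as the paper: the paper multiplies the PDE by $\frac{\uet}{\ue}$ and integrates by parts, which is algebraically identical to your differentiating $\io|\na\ue|^2$ and substituting $\Delta\ue$ from the equation, with the same three ingredients ($\uet=0$ on $\dOm$, positivity $\ue\ge\eps$, and $\io\uet\le 0$ from Lemma \ref{lem:massdecrease}). The only cosmetic difference is the direction of the computation, so there is nothing to add.
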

\begin{proof}
 We multiply equation \eqref{eq:epsprob} by $\frac{\uet}{\ue}$ and integrate by parts. On the lateral boundary $\uet=0$ and by Lemma \ref{lem:massdecrease}, $\io\uet$ is nonpositive. Hence
 \begin{equation}\label{eq:uet2ubd}
  \io \frac{\uet^2}{\ue} = \io \uet \Delta \ue + \io \uet \min\set{\frac1\eps \io |\na\ue|^2} 
 \leq  - \frac12 \ddt \io |\na \ue |^2 
 \end{equation}
on $(0,\infty)$. Since obviously $\io \frac{\uet^2}{\ue}$ is nonnegative, this proves the assertion.
\end{proof}

Having gathered some knowledge about the approximate solutions, let us now turn to the limiting case and make precise what we mean by a solution:

\begin{defn}\label{def:weaksoln}
  By a {\em weak solution} of \eqref{eq:prob} in $\Omega \times (0,\infty)$ we mean a nonnegative function
  \begin{align}\label{eq:solndefregularity}
	u\in L^\infty_{loc}(\bar\Omega \times [0,\infty)) \, \cap \, L^2_{loc}([0,\infty); W_0^{1,2}(\Omega)) 
	\qquad \mbox{with} \qquad u_t \in L^2_{loc}(\bar\Omega \times [0,\infty))
  \end{align}
  which satisfies
  \begin{equation}\label{0vw}
	-\int_0^T \io u\varphi_t + \int_0^\infty \io \nabla u \cdot \nabla (u\varphi)
	=\io u_0 \varphi(\cdot,0) + \int_0^\infty \Big( \io u\varphi \Big) \cdot \Big( \io |\nabla u|^2 \Big)
  \end{equation}
  for all $\varphi \in C_0^\infty(\Omega \times [0,\infty))$.\\
  A weak solution $u$ of \eqref{eq:prob} in $\Omega \times (0,\infty)$ will be called {\em locally positive} if 
  $\frac{1}{u} \in L^\infty_{loc}(\Omega \times (0,\infty))$.
\end{defn}
\begin{remark}\label{rem:lsgdef}
i)  Since $u \in L^2_{loc}([0,\infty);W_0^{1,2}(\Omega))$ and $u_t \in L^2_{loc}(\bar\Omega \times [0,\infty))$
  imply that $u \in C^0([0,\infty);L^2(\Omega))$, (\ref{0vw}) is equivalent
  to requiring that $u(\cdot,0)=u_0$, and that
\begin{equation}\label{0w}
	\int_0^\infty \io u_t \varphi + \int_0^\infty \io \nabla u \cdot \nabla (u\varphi)
	=\int_0^\infty \Big( \io u\varphi \Big) \cdot \Big( \io |\nabla u|^2 \Big)
\end{equation}
  holds for any $\varphi \in C_0^\infty(\Omega \times (0,\infty))$.\\
ii) Furthermore, close inspection of \eqref{0w} and density arguments show that we actually may use any function $\phii\in L^2((0,\infty),W^{1,2}_0(\Om)) \cap L^\infty(\Om\times(0,\infty))$ as test function.
\end{remark}

\begin{theorem}
\label{thm:ex}
 Let $u_0$ be as fixed above. (In particular that means that $u_0$ satisfies (H1), (H2) and (H3) as well as $\io u_0=1$.) Then there exists a sequence $(\eps_k)_{k\in\N}\to 0$ and a locally positive weak solution $u$ of \eqref{eq:prob} in $\Om\times(0,\infty)$ such that 
\begin{align}
 &&\ue&\to u &\quad&\mbox{in } C_{loc}^0([0,\infty);\Lzom)&\quad&\mbox{ and a.e. in } \Om\times(0,\infty)\label{eq:convucl2ae}\\
 &&\uet&\wto u_t &\quad&\mbox{in } L^2_{loc}(\Om\times[0,\infty))\label{eq:convut}\\
 &&\na\ue&\wstarto \na u&\quad&\mbox{in } L^\infty([0,\infty);\Lzom)\label{eq:convnaulilz}\\
 &&\na\ue&\to \na u&\quad&\mbox{in } L^2_{loc}(\Om\times [0,\infty)) &\quad&\mbox{ and a.e. in } \Om\times(0,\infty)\label{eq:convnau}\\
 &&\io  |\na\ue(x,\cdot)|^2 dx &\to \io |\na u(x,\cdot)|^2dx&\quad& \mbox{in } L^1_{loc}([0,\infty))&\quad&\mbox{ and a.e. in }[0,\infty)\label{eq:convintnau}
\end{align}
as $\eps=\eps_k\to 0$.
\end{theorem}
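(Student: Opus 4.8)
The statement is, up to the additional convergence \eqref{eq:convintnau}, the existence result of \cite{klw1}, and the plan is to recover it by the standard compactness programme for the regularization \eqref{eq:epsprob} and then to read off \eqref{eq:convintnau} from the monotonicity of Lemma \ref{lem:epsmon}. The first task is to collect $\eps$-independent a priori bounds. Lemma \ref{lem:epsmon} together with the $W^{1,2}$-convergence \eqref{wez} of the approximating data shows that $\io|\na\ue(t)|^2\le\io|\na\une|^2$ is bounded uniformly in $\eps$ and $t$, so $\na\ue$ is bounded in $L^\infty([0,\infty);\Lzom)$ and $\intnT\io|\na\ue|^2\le CT$. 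Since \eqref{phinorm_lapbdry_initint} forces the $\une$ to be bounded in $L^\infty(\Om)$ uniformly in $\eps$, Lemma \ref{lem13} then yields a bound $\ue\le C_1(T)$ on $\Om\times(0,T)$ that is uniform in $\eps$. Finally, integrating \eqref{eq:uet2ubd} in time gives $\intnT\io\frac{\uet^2}{\ue}\le\frac12\io|\na\une|^2$, and multiplying the integrand by the just-obtained $L^\infty$-bound (writing $\uet^2=\frac{\uet^2}{\ue}\cdot\ue$) turns this into $\intnT\io\uet^2\le C_1(T)\cdot\frac12\io|\na\une|^2$, so $\uet$ is bounded in $L^2_{loc}(\Ombar\times[0,\infty))$.

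From these bounds the convergences \eqref{eq:convut} and \eqref{eq:convnaulilz} follow by weak and weak-$\star$ compactness, and \eqref{eq:convucl2ae} by an Aubin--Lions / Arzel\`a--Ascoli argument: for each $T$ the family $(\ue)$ is bounded in $L^\infty((0,T);W^{1,2}(\Om))$ and, by the H\"older estimate $\norm[\Lzom]{\ue(t)-\ue(s)}\le|t-s|^{1/2}\norm[L^2(\Om\times(0,T))]{\uet}$, equicontinuous with values in $\Lzom$, so Rellich's theorem ($W^{1,2}(\Om)\cptembeddedinto\Lzom$) and Arzel\`a--Ascoli provide a subsequence with $\ue\to u$ in $C^0([0,T];\Lzom)$; a diagonal procedure covers all $T$ and yields a.e.\ convergence. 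As convergence in this topology identifies the distributional limits, the weak-$\Lzom$ limit of $\uet$ is $u_t$ and the weak-$\star$ limit of $\na\ue$ is $\na u$. Local positivity of $u$ is inherited from the uniform-in-$\eps$ lower bounds on compact subsets propagated from \eqref{Kptbelow}.

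The main obstacle is the strong convergence \eqref{eq:convnau}, which is exactly what is needed to pass to the limit in the quadratic gradient source $\big(\io u\phii\big)\big(\io|\na u|^2\big)$ and in the diffusion term $\io\na u\cdot\na(u\phii)$ of the weak formulation \eqref{0vw}; the weak-$\star$ convergence of $\na\ue$ alone is insufficient because of the degenerate structure $\ue\Lap\ue$, and the natural energy identity obtained by testing with $\ue$ degenerates in the unit-mass case (the mass factor $1-\io u$ from Lemma \ref{lem:massdecrease} vanishes), so no gain in the gradient norm can be extracted that way. This delicate point is carried out as in \cite{klw1}: the comparison-based pointwise bounds (Lemmas \ref{lem:greatereps}, \ref{lem13}) and the a.e.\ convergence are combined to upgrade the weak gradient convergence to strong $L^2_{loc}$ convergence, after which the nonlinear terms pass to the limit and $u$ is seen to solve \eqref{0vw}.

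It remains to deduce \eqref{eq:convintnau}, the genuinely new point and the place where the monotonicity enters. By Lemma \ref{lem:epsmon} each map $t\mapsto\io|\na\ue(t)|^2$ is nonincreasing, and by the first step these functions are uniformly bounded; Helly's selection theorem therefore furnishes a further subsequence along which they converge at every point of $[0,\infty)$ to a nonincreasing limit, and dominated convergence upgrades this to convergence in $L^1_{loc}([0,\infty))$. Finally, the strong convergence \eqref{eq:convnau} forces this limit to coincide with $t\mapsto\io|\na u(t)|^2$ for a.e.\ $t$: strong $L^2_{loc}(\Om\times[0,\infty))$ convergence of $\na\ue$ gives $\io|\na\ue(t)|^2\to\io|\na u(t)|^2$ along a subsequence for a.e.\ $t$, and the already-identified Helly limit pins down the whole sequence. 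This establishes \eqref{eq:convintnau} and completes the extraction of the subsequence $(\eps_k)$.
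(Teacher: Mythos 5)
Your a priori estimates and compactness steps (the uniform gradient bound from Lemma \ref{lem:epsmon} and \eqref{wez}, the $L^\infty$ bound from Lemma \ref{lem13}, the $\uet$-bound from integrating \eqref{eq:uet2ubd}, and the Arzel\`a--Ascoli/diagonal extraction) match the paper's argument, and deferring the strong local gradient convergence \eqref{eq:convnau} to \cite{klw1} is consistent with what the paper itself does. The problem lies in your final step, the deduction of \eqref{eq:convintnau}.

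Your identification claim --- that strong $L^2_{loc}(\Om\times[0,\infty))$ convergence of $\na\ue$ gives $\io|\na\ue(t)|^2\to\io|\na u(t)|^2$ for a.e.\ $t$ along a subsequence --- is a non sequitur. The convergence \eqref{eq:convnau} is local \emph{in space}: it controls $\int_0^{T}\int_K|\na\ue-\na u|^2$ only for compact $K\subsub\Om$ (note the contrast with \eqref{eq:convnauweak}, which is stated on $\Ombar\times[0,\infty)$ but is only \emph{weak} convergence). Hence all you can extract is $\int_K|\na\ue(t)|^2\to\int_K|\na u(t)|^2$ for a.e.\ $t$, which shows only that your Helly limit $f$ satisfies $f(t)\ge\io|\na u(t)|^2$: gradient mass of $\ue$ could a priori concentrate near $\dOm$ and disappear in the limit, and nothing in your bounds excludes this. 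This is precisely where the paper needs an additional ingredient, and it is the only place where (H3) enters the proof: testing \eqref{eq:epsprob} with the singular weight $\ue^{q-1}$, $q\in(0,1)$, yields $\intnT\io\ue^{q-1}|\na\ue|^2\le C(T)$, and comparison of $\ue$ with $A\Phi+\eps$ shows that $\ue$ is small near $\dOm$, whence
\[
 \intnT\int_{\Om\setminus K}|\na\ue|^2\le C(T)\sup_{\Om\setminus K}\ue^{1-q}
\]
is small uniformly in $\eps$ once the compact set $K$ is chosen large enough (Lemma 2.8 of \cite{klw1}). Only this uniform control of the gradient near the boundary, combined with \eqref{eq:convnau}, upgrades local convergence of the Dirichlet integrals to convergence of $\io|\na\ue(\cdot,t)|^2$ itself; with it in hand, your Helly/dominated-convergence detour is not even needed. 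Without it, the proof of \eqref{eq:convintnau} has a genuine gap.
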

\begin{proof}
Due to the above information that we have obtained for solutions with unit initial mass, we can begin the proof without referring to \cite[Lemma 2.7]{klw1}. The rest, however, follows the proof of \cite[Thm. 2.11]{klw1} very closely, where details can be found. Here we may therefore present the line of reasoning more briefly:\\
Let $T>0$. Lemma \ref{lem:epsmon} asserts boundedness of $\io |\na\ue|^2$ with a bound that is independent of $\eps$ due to \eqref{wez}. Lemma \ref{lem13}
converts this into uniform boundedness of the solutions $\ue$ of \eqref{eq:epsprob} on $[0,T)$. Upon integration of \eqref{eq:uet2ubd} we obtain a bound on $\int_0^T \io \frac{\uet^2}\ue$ and hence on $\intnT\io \uet^2$. Together with Lemma \ref{lem:epsmon} we can use this to infer boundedness in $C^{\frac12}([0,T);L^2(\Om))$. From the mentioned bounds we can deduce the existence of a subsequence $(\uek)_{k\nat}$ of $(\ue)_{\eps\in(0,1)}$ converging in $C^0([0,T);\Lzom)$, such that $\uet\wto u_t$ in $L^2(\Om\times(0,T))$ and $\na\ue\wto \na u$ in $L^2(\Om\times(0,T))$ as $k\to\infty$. 
By a diagonal sequence argument we hence finally obtain $u\col \Om\times [0,\infty)\to\R$ 
enjoying the regularity properties required for a solution in Definition \ref{def:weaksoln} and arising as limit of the $\ue$ in the sense of \eqref{eq:convucl2ae}, \eqref{eq:convut} and 
\begin{equation}\label{eq:convnauweak}
  \na\ue\wto\na u \quad\mbox{in } L^2_{loc}(\Ombar\times[0,\infty)) \qquad\mbox{as }\eps=\eps_k\downto 0.
\end{equation}
Combining the bound from Lemma \ref{lem:epsmon} with \eqref{eq:convnauweak} for identification of the limit, we easily arrive at \eqref{eq:convnaulilz}.\\ 
Comparison from below with $(x,t)\mapsto \phi(x) \frac{c(\Om')}{1+c(\Om')t}$, where $\Om'\subsub \Om$ is a smoothly bounded subdomain of $\Om$, $\phi$ solves $-\Laplace \phi=1$ in $\Om'$ under homogeneous Dirichlet boundary conditions and $c(\Om')$ denotes the common lower bound for $\uen$ provided by \eqref{Kptbelow}, also ensures the local positivity of $u$.\\
To derive \eqref{eq:convnau}, we then choose $K\subsubset\Om$ and $T'\in(0,T)$ and harness a nonnegative function $\psi\in C_0^\infty(\Om)$ which satisfies $\psi\equiv 1$ on $K$, so that 
\begin{align*}
 \int_0^{T'}\int_K |\na \ue-\na u|^2\leq& \int_0^{T'}\io |\na \ue-\na u|^2\psi \\=&-\int_0^{T'}\io (\ue-u)\Delta \ue \psi -\int_0^{T'}\io (\ue-u)\na\ue\cdot\na\psi-\int_0^{T'}\io \na(\ue-u)\cdot\na u\psi,
\end{align*}
where we can handle the second and third integral by \eqref{eq:convucl2ae} and \eqref{eq:convnaulilz}. In $\int_0^{T'}\io (\ue-u)\Delta \ue \psi$, we employ \eqref{eq:epsprob}, \eqref{eq:convucl2ae} and Lemma \ref{lem:epsmon} and estimate 
\[
 \left\lvert\int_0^{T'} \io (\ue-u) \cdot \frac{u_{\eps t}}{\ue} \cdot \psi\right\rvert \le \left( \int_0^{T'} \io \frac{u_{\eps t}^2}{\ue} \right)^\frac{1}{2} \cdot
	\left( \int_0^{T'} \io \frac{(\ue-u)^2}{\ue} \cdot \psi^2 \right)^\frac{1}{2}\le C\norm[\infty]{\psi} \left(\int_0^{T'} \io (\ue-u)^2\right)^{\frac12},
\]
again with the help of \eqref{eq:uet2ubd} and the local lower estimate. 
Another application of \eqref{eq:convucl2ae} then is sufficient to finally derive \eqref{eq:convnau}.\\
As a result of testing the equation by the singular weight $\ue^{q-1}$ for some $q\in(0,1)$, we obtain 
\[
 \frac1q\frac{d}{dt} \io \ue^q \leq -q \io \ue^{q-1} |\na \ue|^2 +\io \ue^q\io |\na \ue|^2
\]
and hence $\intnT \io \ue^{q-1} |\na\ue|^2 \leq C(T)$, so that we gain control over the integral of $|\na\ue|^2$ over sets where $\ue$ is small in the sense below. Namely, comparison with $A\Phi+\eps$ for some $A>0$ (whose possibility can be traced back to \eqref{phinorm_lapbdry_initint} and thus (H3)) asserts that the vicinity of $\dOm$ is such a set, so that 
\[\intnT\int_{\Om\setminus K} |\na \ue|^2 \leq \sup_{\Om\setminus K} \ue^{1-q} \intnT\int_{\Om\setminus K} |\na \ue|^2 \ue^{q-1}\leq C(T) \sup_{\Om\setminus K} \ue^{1-q} \] 
becomes small uniformly in $\eps>0$, provided the compact set $K\sub \Om$ is chosen sufficiently large (this is Lemma 2.8 of \cite{klw1}).  Together with \eqref{eq:convnau} this makes it possible to deduce also \eqref{eq:convintnau}. \\
Finally, a combination of the convergence results with equation \eqref{eq:epsprob} for $\ue$ shows that $u$ is a weak solution to \eqref{eq:prob} in the sense of Definition \ref{def:weaksoln}.
\end{proof}

In \cite{klw1}, the corresponding theorem states existence on a possibly small time interval $(0,T)$ and is accompanied by a continuation argument.  
Since we had obtained a bound on $\io |\na \ue|^2$ before, it was possible to have Theorem \ref{thm:ex} state existence of a solution $u$ which is approximated by a sequence $(u_{\eps_j})_{j\in\N}$ of solutions to \eqref{eq:epsprob} on the whole set $\Om\times (0,\infty)$. Of course, this renders any further continuation argument unnecessary for the present case.

The following lemma, though already observed in \cite[Thm 3.1, Cor 3.2]{klw1}, is of central importance to the current article, so that we give at least an outline of the proof also here:
\begin{lemma}\label{lem:massconservation}
Any 
 weak solution $u$ of \eqref{eq:prob} in the sense of Definition \ref{def:weaksoln} satisfies 
 \[
  \io u(t)=1 
 \]
 for all $t>0$.
\end{lemma}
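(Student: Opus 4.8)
The plan is to show that $m(t):=\io u(t)$ is constant and equal to its initial value $m(0)=\io u_0=1$. Formally this is immediate: testing the equation with the constant function $1$ and integrating by parts, the Dirichlet condition annihilates the boundary term $\ido u\,\na u\cdot\ny$, and one is left with $m'(t)=\big(m(t)-1\big)\io|\na u|^2$, so that $m\equiv 1$ by uniqueness for this (linear-in-$m$) ODE. The difficulty is that $1\notin\Wnezom$, so the constant is not an admissible test function in \eqref{0w}; moreover the boundary integral one would like to discard is not literally present in the weak formulation but hidden inside the gradient term $\io\na u\cdot\na(u\varphi)$.

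First I would replace the constant by a family of cut-offs. Writing $d(x)=\mathrm{dist}(x,\pO)$, set $\zeta_j:=\min\{1,j\,d\}$, so that $\zeta_j\in\Wnezom\cap L^\infty(\Om)$, $0\le\zeta_j\le1$, $\zeta_j\nearrow1$ pointwise, and $\na\zeta_j=j\,\na d$ is supported in the thin layer $\{d<1/j\}$. By Remark \ref{rem:lsgdef}(ii) the products $\zeta_j\,\eta$ with $\eta\in C_0^\infty((0,\infty))$ are admissible in \eqref{0w}; letting $\eta$ range over all such functions yields, for a.e.\ $t>0$, the identity
\[
\io u_t\,\zeta_j+\io\na u\cdot\na(u\zeta_j)=\Big(\io u\,\zeta_j\Big)\io|\na u|^2 .
\]
Expanding $\na(u\zeta_j)=\zeta_j\na u+u\,\na\zeta_j$ splits the second term into $\io\zeta_j|\na u|^2$ and the ``boundary'' contribution $\io u\,\na u\cdot\na\zeta_j$.

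The crux is to let $j\to\infty$ and to verify that this last term vanishes. Here I would invoke Hardy's inequality $\io \frac{u^2}{d^2}\le C\io|\na u|^2$, valid for $u\in\Wnezom$ on the smooth bounded domain $\Om$. With the cut-off above, using that $j\le 1/d$ on the layer $\{d<1/j\}$,
\[
\Big|\io u\,\na u\cdot\na\zeta_j\Big|\le \int_{\{d<1/j\}} \frac{|u|}{d}\,|\na u|\le\Big(\io\frac{u^2}{d^2}\Big)^{\!1/2}\Big(\int_{\{d<1/j\}}|\na u|^2\Big)^{\!1/2}.
\]
The first factor is bounded by $C\|\na u\|_{\Lzom}$ via Hardy, while the second tends to $0$ because $|\na u|^2\in\Leom$ and $|\{d<1/j\}|\to0$; hence the product vanishes as $j\to\infty$. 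The remaining passages to the limit are routine dominated/monotone convergence: $\io u_t\zeta_j\to\io u_t=m'(t)$, $\io\zeta_j|\na u|^2\to\io|\na u|^2$, and $\io u\zeta_j\to m(t)$.

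Combining these yields, for a.e.\ $t>0$, the differential identity $m'(t)=\big(m(t)-1\big)\,g(t)$ with $g(t)=\io|\na u(t)|^2\ge0$ and $g\in L^1_{loc}([0,\infty))$. Since $u\in C^0([0,\infty);\Lzom)$ with $u_t\in L^2_{loc}$, the map $m$ is locally absolutely continuous with $m(0)=\io u_0=1$, so $h:=m-1$ satisfies $h(t)=\int_0^t h(s)\,g(s)\,ds$ with $h(0)=0$; Gronwall's lemma then forces $h\equiv0$, i.e.\ $\io u(t)=1$ for all $t>0$. I expect the only genuine obstacle to be the Hardy-type control of the near-boundary gradient term $\io u\,\na u\cdot\na\zeta_j$; everything else is bookkeeping once the weak formulation is localized in time.
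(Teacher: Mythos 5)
Your proof is correct, and at the top level it follows the same strategy as the paper: establish the integral identity for the mass, $y(t)-y(s)=\int_s^t (y(\tau)-1)\io|\na u(\cdot,\tau)|^2\,d\tau$ with $y(t)=\io u(t)$, and then conclude $y\equiv 1$ via Gronwall. The difference lies in how the inadmissible test function $\varphi\equiv 1$ is legitimized, and here your route is genuinely different from (and more self-contained than) the paper's. The paper inserts a function $\chi$ that is constant in space and piecewise linear in time, invoking Remark \ref{rem:lsgdef}(ii); but a nonzero spatial constant does not belong to $\Wnezom$, so $\chi$ is not literally in the class covered by that remark --- the paper is explicitly giving only an outline, with the boundary details deferred to \cite{klw1}. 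You instead treat the temporal localization as the routine part (arbitrary $\eta\in C_0^\infty((0,\infty))$, then a.e.\ $t$) and do the spatial work explicitly: approximate $1$ by the cutoffs $\zeta_j=\min\{1,jd\}$ and kill the resulting near-boundary term $\io u\,\na u\cdot\na\zeta_j$ by Hardy's inequality combined with absolute continuity of the integral of $|\na u|^2\in\Leom$ over the shrinking layer $\{d<1/j\}$. This closes precisely the step the paper leaves implicit, and it does so using only the information in Definition \ref{def:weaksoln} (namely $u(\cdot,t)\in\Wnezom$, which is what Hardy needs), so it applies verbatim to any weak solution. Two bookkeeping points you should state explicitly: the exceptional $t$-null set in your pointwise-in-time identity depends on $j$, so one must intersect these sets over $j\in\N$ before letting $j\to\infty$ at fixed $t$; and Gronwall is applied after taking absolute values, $|h(t)|\le\int_0^t|h|g$ with $g\ge 0$ and $g\in L^1_{loc}([0,\infty))$ --- the paper phrases this as applying Gronwall to $y-1$ and $1-y$ separately.
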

\begin{proof}
By Remark \ref{rem:lsgdef} (ii), we may insert the function defined by 
\[
 \chi(x,\tau)=\begin{cases}
0,& t<s-\delta\\
1+\frac{\tau-s}\delta,&s-\delta\leq\tau<s\\
1, & s\leq\tau<t \\
             1-\frac{\tau-t}\delta,&t\leq \tau<t+\delta\\
	      0,&\tau\geq t+\delta
\end{cases}
\]
for $0<s<t<T$ and $0<\delta<\min\set{s,T-t}$ as test function in \eqref{0w}. 
Since $u\in C^0([0,\infty),\Lzom)$ according to Remark \ref{rem:lsgdef} (i), letting $\delta\to0$ we obtain that $y(t) = \io u(t)$ defines an absolutely continuous function on $[0,\infty)$ and 
\begin{equation}\label{eq:yeq}
 y(t)-y(s) = \int_s^t \left((y(\tau)-1)\io |\na u(x,\tau)|^2 dx \right) d\tau
\end{equation}
is satisfied for all $s,t\ge 0$. Inserting $s=0$, applications of Gronwall's inequality to $y-1$ and $1-y$, respectively, conclude the proof.
\end{proof}

\section{Monotonicity of $t\mapsto \io |\na u(t)|^2$}\label{sec:mon}
The monotonicity of $t\mapsto \io |\na\ue(t)|^2$ from Lemma \ref{lem:epsmon} can be carried over to $u$ in the following sense:

\begin{lemma}\label{lem:mon}
 Let $u$ be a solution provided by Theorem \ref{thm:ex}. There exists a set $N\sub(0,\infty)$ of measure $0$ such that for every $t_1\in[0,\infty)\setminus N$ and each $t_2>t_1$
 \[
  \io |\na u(t_2)|^2 \le \io |\na u(t_1)|^2.
 \]
\end{lemma}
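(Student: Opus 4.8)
The plan is to transfer the pointwise-in-$t$ monotonicity from Lemma~\ref{lem:epsmon} to the limit solution $u$ by exploiting the almost-everywhere convergence of the Dirichlet energies established in \eqref{eq:convintnau}. First I would recall that, along the approximating sequence $\eps=\eps_k$, we have the convergence $\io|\na\ue(x,\cdot)|^2\,dx\to\io|\na u(x,\cdot)|^2\,dx$ in $L^1_{loc}([0,\infty))$ \emph{and} almost everywhere on $[0,\infty)$. Let $N\sub(0,\infty)$ denote the exceptional null set where this pointwise convergence fails; I would prove the claimed inequality precisely for $t_1\in[0,\infty)\setminus N$ (and $t_2>t_1$ arbitrary, which we may also take outside $N$ since a single inequality with $t_2\in N$ then follows by a limiting argument, or more simply by noting the right-hand side only uses $t_1\notin N$).

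The core of the argument is then a two-line passage to the limit. For fixed $t_1\in[0,\infty)\setminus N$ and $t_2>t_1$, Lemma~\ref{lem:epsmon} gives, for every $k$,
\[
 \io|\na u_{\eps_k}(t_2)|^2 \le \io|\na u_{\eps_k}(t_1)|^2 .
\]
I would let $k\to\infty$. On the right-hand side, since $t_1\notin N$, the almost-everywhere convergence in \eqref{eq:convintnau} gives $\io|\na u_{\eps_k}(t_1)|^2\to\io|\na u(t_1)|^2$. For the left-hand side the natural tool is weak lower semicontinuity: by \eqref{eq:convnaulilz} (the weak-$\star$ convergence $\na\ue\wstarto\na u$ in $L^\infty([0,\infty);\Lzom)$) together with the $C^0([0,\infty);\Lzom)$-convergence $\ue\to u$, one controls the energies at the single time slice $t=t_2$ and obtains
\[
 \io|\na u(t_2)|^2 \le \liminfk \io|\na u_{\eps_k}(t_2)|^2 .
\]
Combining the two displays yields $\io|\na u(t_2)|^2\le\io|\na u(t_1)|^2$, as required.

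The main obstacle I anticipate is making rigorous the evaluation of the energy at the \emph{single} time $t=t_2$ on the left: the $L^\infty([0,\infty);\Lzom)$ weak-$\star$ convergence of $\na\ue$ does not immediately descend to lower semicontinuity of $\io|\na u(t)|^2$ at an individual instant, because a priori $t\mapsto\io|\na u(t)|^2$ is only defined almost everywhere. To circumvent this I would either (i) choose $t_2\notin N$ as well and apply the a.e.\ convergence directly to get equality of limits at $t_2$, so that no lower-semicontinuity is needed and the chain of inequalities is exact; or (ii) if one insists on arbitrary $t_2>t_1$, test the weak-$\star$ convergence against characteristic functions of short intervals $(t_2,t_2+h)$, divide by $h$, and let $h\downarrow 0$ at Lebesgue points of $t\mapsto\io|\na u(t)|^2$, again invoking weak lower semicontinuity of the $\Lzom$-norm of the spatial gradient together with the uniform-in-$t$ bound from Lemma~\ref{lem:epsmon} and \eqref{wez}. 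The cleanest route is (i): since the complement of $N$ is dense and of full measure, the monotonicity inequality holds for all $t_1,t_2\notin N$, and then for arbitrary $t_2>t_1$ with $t_1\notin N$ it follows by taking $t_2'\downarrow t_2$ with $t_2'\notin N$ and using the $L^1_{loc}$-convergence to identify the limit, or simply by defining the null set $N$ to already record both the failure of a.e.\ convergence and the non-Lebesgue points of the limiting energy. Either way, the essential content is the combination of Lemma~\ref{lem:epsmon} with \eqref{eq:convintnau} and \eqref{eq:convnaulilz}, and the only care required is the single-time-slice subtlety just described.
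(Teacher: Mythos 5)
Your main argument is correct and is, in substance, the paper's own proof with the compactness step applied in a slightly different place. The paper also starts from Lemma~\ref{lem:epsmon} and the a.e.\ convergence \eqref{eq:convintnau} at $t_1\notin N$; to reach an arbitrary $t_2>t_1$ (possibly in $N$) it picks a sequence of good times $\tau_k\to t_2$ (outside $N$ and outside a second null set $N_2$ where the essential bound behind \eqref{eq:convnaulilz} could fail), obtains $\io|\na u(\tau_k)|^2\le\io|\na u(t_1)|^2$ by exactly your two-line limit passage, and then uses weak $\Lzom$-compactness of $(\na u(\tau_k))_k$, identification of the weak limit as $\na u(t_2)$ via $u\in C^0_{loc}([0,\infty);\Lzom)$, and weak lower semicontinuity. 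Your variant runs the same compactness argument directly in $\eps$ at the fixed time $t_2$: the sequence $(\na \uek(t_2))_k$ is bounded in $\Lzom$ by Lemma~\ref{lem:epsmon} together with \eqref{wez}, and since \eqref{eq:convucl2ae} gives $\uek(t_2)\to u(t_2)$ in $\Lzom$ at \emph{every} single time (that convergence is in $C^0_{loc}$ in time, not merely a.e.), any weak $\Lzom$-limit of a subsequence of $(\na\uek(t_2))_k$ must coincide with the distributional gradient $\na u(t_2)$; weak lower semicontinuity then yields $\io|\na u(t_2)|^2\le\liminfk\io|\na\uek(t_2)|^2$. So the single-time-slice inequality you were worried about is in fact rigorous as stated --- though the honest citation for it is this uniform bound plus \eqref{eq:convucl2ae}, not the weak-$\star$ convergence \eqref{eq:convnaulilz} on its own. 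Your version even avoids the paper's auxiliary null set $N_2$ and the time-approximation sequence.

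By contrast, the fallback you label ``the cleanest route'' (i) does not close the gap. Proving the inequality for $t_1,t_2\notin N$ is easy, but the lemma demands \emph{all} $t_2>t_1$, and the extension step you sketch is flawed: $L^1_{loc}$-convergence of the energies (in $\eps$, or of $t\mapsto\io|\na u(t)|^2$ along $t_2'\downarrow t_2$) carries no information about the value at the single instant $t_2$, because an $L^1_{loc}$ limit is only determined up to null sets, while $\io|\na u(t_2)|^2$ is a genuine pointwise quantity (a priori it could even be $+\infty$ for $t_2\in N$). Likewise, enlarging $N$ to include the non-Lebesgue points of the limiting energy only restricts the admissible $t_1$; it cannot help with $t_2$, which ranges over everything. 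Any correct completion of route (i) must again pass through weak compactness of gradients at nearby good times plus the $C^0([0,\infty);\Lzom)$-identification --- which is precisely the paper's argument. So the right fix is to promote your original ``obstacle-ridden'' main argument, made rigorous as above, to the actual proof, and to discard (i).
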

\begin{proof}
 Let $\ue$ be solutions to \eqref{eq:epsprob} approximating $u$ as in Theorem \ref{thm:ex} and let $N,N_2 \sub (0,\infty)$ be sets of measure zero such that  
\begin{equation}\label{eq:convergenceoutsidemeasurezero}
 \io |\na \ue(t)|^2\to \io |\na u(t)|^2 \quad \mbox{for any }t\in [0,\infty)\setminus N 
\end{equation}
as $\eps\to 0$ and that, with $C=\esssup_{t>0} \io |\na u(t)|^2<\infty$,
\[
 \io |\na u(t)|^2 \le C \quad \mbox{for any }t\in [0,\infty)\setminus N_2
\]
hold. These sets are provided as part of Theorem \ref{thm:ex} by \eqref{eq:convintnau} and \eqref{eq:convnaulilz}, respectively.
Let $t_1\in [0,\infty)\setminus N$ and let $t_2>t_1$. Let $(\tau_k)_{k\in\N}\subset [t_1,\infty)\setminus (N\cup N_2)$ be a sequence with limit $t_2$. 
As $t_1, \tau_k\notin N$, we can infer from Lemma \ref{lem:epsmon} and \eqref{eq:convergenceoutsidemeasurezero} that
\begin{equation}\label{eq:intnaualmostmonotone}
  \io |\na u(\tau_k)|^2 \ot \io |\na \ue(\tau_k)|^2 \le \io |\na \ue(t_1)|^2 \to  \io |\na u(t_1)|^2
\end{equation}
as $\eps=(\eps_{l})_l \to 0$.\\
Since $\tau_k\notin N_2$, the sequence $(\na u(\tau_k))_{k\in\N}$ is bounded in $\Lzom$ and a subsequence converges weakly in $\Lzom$. Because $u\in C^0_{loc}([0,\infty),\Lzom)$ by \eqref{eq:convucl2ae}, we can identify the limit and hence have $\na u(\tau_k)\wto \na u(t_2)$. Making use of weak lower semicontinuity and \eqref{eq:intnaualmostmonotone}, we obtain the claim from 
\[
 \io |\na u(t_2)|^2 \le \liminf_{k\to \infty} \io |\na u(\tau_k)|^2 \le \io |\na u(t_1)|^2.\qedhere
\]
\end{proof}

\begin{cor}\label{cor:naubd}
 The set $\set{\io |\na u(t)|^2: t>0}$ is bounded.
\end{cor}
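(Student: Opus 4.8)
The plan is to upgrade the almost-everywhere bound already contained in Theorem \ref{thm:ex} to a bound valid for \emph{every} $t>0$, using the everywhere-valid monotonicity of Lemma \ref{lem:mon} to fill in the exceptional times. Concretely, by \eqref{eq:convnaulilz} the weak-$\star$ limit $\na u$ lies in $L^\infty([0,\infty);\Lzom)$, so that
\[
 C:=\esssup_{t>0}\io|\na u(t)|^2<\infty,
\]
and there is a null set $N_2\sub(0,\infty)$ with $\io|\na u(s)|^2\le C$ for every $s\in[0,\infty)\setminus N_2$. On the other hand, Lemma \ref{lem:mon} furnishes a null set $N$ such that $\io|\na u(t_2)|^2\le\io|\na u(t_1)|^2$ whenever $t_1\in[0,\infty)\setminus N$ and $t_2>t_1$.

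The decisive step is then to combine the two statements at an intermediate ``good'' time. Given an arbitrary $t>0$, I would observe that $N\cup N_2$ is a null set while $(0,t)$ has positive measure, so that $(0,t)\setminus(N\cup N_2)\neq\emptyset$; pick any $s$ in this set. Since $s\notin N$ and $t>s$, the monotonicity of Lemma \ref{lem:mon} applies and gives $\io|\na u(t)|^2\le\io|\na u(s)|^2$; since moreover $s\notin N_2$, the right-hand side is at most $C$. Hence $\io|\na u(t)|^2\le C$ for the arbitrary $t>0$, which is exactly the asserted boundedness.

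The only point requiring care, and what I would flag as the main obstacle, is that neither ingredient holds pointwise for all $t$ on its own: the monotonicity conclusion of Lemma \ref{lem:mon} requires the earlier time $t_1$ to avoid $N$, and the uniform bound $C$ is only available off $N_2$. The resolution is precisely the density of $(0,t)\setminus(N\cup N_2)$ in every initial interval, which lets me insert a reference time $s$ that simultaneously avoids both exceptional sets and lies below the target time $t$. No further estimates are needed; in particular the behaviour near $t=0$ is handled automatically, since for each $t>0$ the interval $(0,t)$ is long enough to contain such an $s$.
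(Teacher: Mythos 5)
Your argument is correct: the essential bound from \eqref{eq:convnaulilz} together with the monotonicity of Lemma \ref{lem:mon}, applied from a ``good'' time $s\in(0,t)\setminus(N\cup N_2)$, does yield the bound for every $t>0$, and the density argument you flag as the key point is exactly what makes it work. It is, however, slightly more roundabout than what the paper intends. Lemma \ref{lem:mon} is stated with $N\sub(0,\infty)$ and $t_1\in[0,\infty)\setminus N$, so $t_1=0$ is itself an admissible anchor time (the convergence \eqref{eq:convergenceoutsidemeasurezero} holds at $t=0$ because $\une\to u_0$ in $W^{1,2}(\Om)$ by \eqref{wez}). Taking $t_1=0$ gives at once $\io|\na u(t)|^2\le\io|\na u_0|^2$ for all $t>0$: a single application of the lemma, no second null set $N_2$, and an explicit constant $\io|\na u_0|^2$ in place of the anonymous essential supremum $C$. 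What your variant buys in exchange is robustness: it would survive even if the monotonicity statement were only available for $t_1\in(0,\infty)\setminus N$, i.e.\ if nothing were known about the behaviour at the initial time. Both proofs are one-step consequences of Lemma \ref{lem:mon}; yours replaces the distinguished good time $0$ by a Lebesgue-a.e.\ good time, at the cost of additionally invoking \eqref{eq:convnaulilz}.
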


\section{A minimization problem}\label{sec:min}
In order to obtain convergence statements, it will be important to estimate $\io |\na u(t)|^2$ and to identify the possible limit of $u(t)$. 
Both of these aims will be feasible by the following minimization result.
  
\begin{theorem}\label{thm:min}
 Consider the set 
\[
 M=\set{v\in\Wnezom; \io v=1}
\]
and the functional
\[
 J(v) = \io |\na v|^2, \qquad v\in\Wnezom.
\]
Then the minimization problem 
\[
 \min_{v\in M} J(v)
\]
has a unique minimizer $w\in M$. With $\Phi$ as in Definition \ref{def:Phi}, 
this minimizer satisfies 
\begin{equation}\label{eq:thisisminimizer}
 w= \frac{1}{\io \Phi} \Phi,
\end{equation}
 and we have 
\[  \min_{v\in M} J(v) = \frac{1}{\io\Phi}.\]
\end{theorem}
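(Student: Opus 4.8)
The plan is to solve this constrained minimization problem by the direct method of the calculus of variations, then identify the minimizer via the Euler--Lagrange equation and convexity. First I would establish existence of a minimizer: the functional $J(v)=\io|\na v|^2$ is nonnegative, hence bounded below on $M$, so a minimizing sequence $(v_k)\sub M$ exists with $J(v_k)\to\inf_M J=:m$. Since $J(v_k)$ is bounded and $\Wnezom$ carries the norm $\norm[\Wnezom]{v}^2=\io|\na v|^2$ (by the Poincar\'e inequality, equivalent to the full $W^{1,2}$ norm), the sequence is bounded in $\Wnezom$, so a subsequence converges weakly to some $w\in\Wnezom$. Weak lower semicontinuity of $J$ (it is the square of a norm, hence convex and continuous, therefore weakly lsc) gives $J(w)\le\liminf J(v_k)=m$. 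It remains to check $w\in M$, i.e.\ $\io w=1$: this follows because the linear functional $v\mapsto\io v$ is continuous on $\Wnezom$ (as $\Om$ is bounded, $\Wnezom\hookrightarrow L^1(\Om)$), hence weakly continuous, so $\io w=\lim\io v_k=1$. Thus $w$ is a minimizer.

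Next I would derive the Euler--Lagrange equation. For any $\psi\in\Wnezom$ with $\io\psi=0$, the curve $w+s\psi$ stays in $M$, and minimality forces $\frac{d}{ds}J(w+s\psi)\big|_{s=0}=2\io\na w\cdot\na\psi=0$ for all such admissible directions. This orthogonality against all mean-zero test functions means, via a Lagrange-multiplier argument, that there is a constant $\lambda$ with $\io\na w\cdot\na\psi=\lambda\io\psi$ for every $\psi\in\Wnezom$; that is, $-\Laplace w=\lambda$ weakly in $\Om$ with $w\amrand=0$. Comparing with the defining problem $-\Laplace\Phi=1$, $\Phi\amrand=0$, uniqueness for the Poisson problem yields $w=\lambda\Phi$. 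The constraint $\io w=1$ then fixes $\lambda=1/\io\Phi$, giving \eqref{eq:thisisminimizer}.

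To compute the minimal value and simultaneously confirm uniqueness, I would use the structure $w=\frac{1}{\io\Phi}\Phi$. Testing $-\Laplace\Phi=1$ against $\Phi$ itself gives $\io|\na\Phi|^2=\io\Phi$, so $J(w)=\frac{1}{(\io\Phi)^2}\io|\na\Phi|^2=\frac{1}{\io\Phi}$, as claimed. Uniqueness follows from strict convexity: $J$ is strictly convex on the affine constraint set $M$ because for distinct $v_0,v_1\in M$ the midpoint satisfies $J\!\left(\tfrac{v_0+v_1}{2}\right)=\tfrac12 J(v_0)+\tfrac12 J(v_1)-\tfrac14\io|\na(v_0-v_1)|^2$, and $\io|\na(v_0-v_1)|^2>0$ whenever $v_0\neq v_1$ (again by Poincar\'e). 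Hence any two minimizers must coincide, so $w$ is the unique minimizer.

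I do not expect a serious obstacle here; the argument is a textbook application of the direct method. The only points requiring mild care are the justification that the mean-zero variations $\psi$ exhaust the right test class to conclude $-\Laplace w=\lambda$ (equivalently, producing the Lagrange multiplier cleanly, which one can do by picking a fixed $\psi_0$ with $\io\psi_0=1$ and writing a general $\psi$ as $(\psi-(\io\psi)\psi_0)+(\io\psi)\psi_0$), and the continuity of $v\mapsto\io v$ on $\Wnezom$, which uses only boundedness of $\Om$. Both are routine, so the main work is simply organizing existence, the variational identity, and the strict-convexity uniqueness argument in sequence.
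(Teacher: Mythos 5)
Your proof is correct, and its skeleton (direct method, Euler--Lagrange identification, convexity-based uniqueness) matches the paper's; the implementations of two of the three steps, however, genuinely differ. For the identification, the paper avoids your Lagrange-multiplier bookkeeping by using perturbations renormalized to stay in $M$: for arbitrary $\phii\in\Wnezom$ it considers $\lambda\mapsto J\bigl(\tfrac{w+\lambda\phii}{1+\lambda\io\phii}\bigr)$, whose vanishing derivative at $\lambda=0$ yields $\io\na w\cdot\na\phii=\io\phii\,\io|\na w|^2$ for all $\phii\in\Wnezom$. The multiplier thus appears automatically as $\my=\io|\na w|^2=J(w)$, so that $w/\my=\Phi$, $\io\Phi=1/\my$, and the minimal value all drop out at once, with no need for your separate identity $\io|\na\Phi|^2=\io\Phi$ (which is itself correct, obtained by testing $-\Laplace\Phi=1$ with $\Phi$). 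Your route---mean-zero variations plus the decomposition $\psi=(\psi-(\io\psi)\psi_0)+(\io\psi)\psi_0$---is equally valid and arguably more standard, at the cost of that one extra computation. For uniqueness, the paper evaluates $J$ along the segment $\thetaa w+(1-\thetaa)\what$ and invokes the equality case of the Cauchy--Schwarz inequality to conclude $\what=\lambda w$, with $\lambda=1$ forced by the constraint; your parallelogram identity $J\bigl(\tfrac{v_0+v_1}{2}\bigr)=\tfrac12 J(v_0)+\tfrac12 J(v_1)-\tfrac14\io|\na(v_0-v_1)|^2$ together with the Poincar\'e inequality reaches the same conclusion more directly, without any appeal to an equality case. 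Note finally that in both your argument and the paper's the separate uniqueness step is, strictly speaking, redundant: the Euler--Lagrange analysis pins down \emph{every} minimizer as $\Phi/\io\Phi$, so uniqueness is already a corollary of the identification.
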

\begin{proof} Existence of a unique minimizer follows from coercivity and strict convexity of $J$ in combination with convexity of $M$ by straightforward arguments. In order to make the article self-contained, we give the short proofs below. Afterwards we employ a variational argument to show \eqref{eq:thisisminimizer}.\\
 {\bf Uniqueness:} Let $w, \what \in M$ be two minimizers. Let $\thetaa\in(0,1)$. Then $\thetaa w + (1-\thetaa)\what\in M$, and by the Cauchy-Schwarz inequality we have
\begin{align}\label{eq:Jconvex}
 J(\thetaa w+(1-\thetaa) \what) =& \thetaa^2\io|\na w|^2 + 2\thetaa(1-\thetaa)\io \na w\cdot\na \what + (1-\thetaa)^2\io|\na\what|^2\nn\\
\le&\thetaa^2\io|\na w|^2 + 2\thetaa(1-\thetaa) \left(\io|\na w|^2\right)^{\frac12}\left(\io|\na \what|^2\right)^{\frac12}+(1-\thetaa)^2 \io|\na \what|^2\nn\\
=& \left(\thetaa \left(\io|\na w|^2\right)^{\frac12}+(1-\thetaa)\left(\io|\na \what|^2\right)^{\frac12}\right)^2 \nn\\
=& \left(\thetaa (\min_{v\in M} J(v))^{\frac12}+(1-\thetaa)(\min_{v\in M} J(v))^\frac12\right)^2 = \min_{v\in M} J(v).\nn
\end{align}
Since having a strict inequality in this formula would contradict the definition of a minimizer, actually equality holds and the Cauchy-Schwarz inequality allows us to conclude that $\what=\lambda w$ for some $\lambda\in \R$.
Since $w, \what\in M$, we have $1=\io \what = \lambda \io w =\lambda\cdot 1$, that is, $w=\what$ and uniqueness of the minimizer is proven.\\
 {\bf Existence:} Let $(w_k)_{k\in\N}\subset M\subset \Wnezom$ be a sequence such that $J(w_k)\to \inf_{v\in M} J(v)$. Then $(J(w_k))_k$ and hence $(w_k)_k$ is bounded. Therefore there is a subsequence $(w_{k_l})_l$ converging weakly in $\Wnezom$ to some $w\in \Wnezom$.\\
 The set $M$ is convex and closed, hence weakly sequentially closed, thus $w\in M$.\\
 As the functional $J$ is continuous and convex
, it is weakly sequentially lower semicontinuous and 
\[
 \inf_{v\in M} J(v)\leq J(w)\le \liminfl J(w_{k_l}) = \inf_{v\in M} J(v).
\]
 Thus $w=\min_{v\in M} J(v)$. \\
 {\bf Properties:} Let $\phii\in W_0^{1,2}(\Om)$ and $\lambda_0:=\left\lvert\io \phii\right\rvert^{-1}$. If we define the polynomial functions $P(\lambda):=\io|\na w|^2 + 2\lambda \io \na w\cdot \na \phii + \lambda^2\io |\na \phii|^2$ and $Q(\lambda):=1+2\lambda\io\phii+\lambda^2\io \phii^2$, then the function 
\[
 (-\lambda_0,\lambda_0)\ni \lambda \mapsto \frac{P(\lambda)}{Q(\lambda)}=\io \left\lvert\nabla \frac{w+\lambda\phii}{1+\lambda\io\phii} \right\rvert^2
\]
is differentiable and, since $\frac{w+\lambda\phii}{1+\lambda\io\phii}=\frac{w+\lambda\phii}{\io(w+\lambda\phii)}\in M$, has a minimum at $\lambda=0$, so that $P'(0)Q(0)-P(0)Q'(0)=0$, i.e.
\[
 2\io \na w\cdot \na\phii - 2\io \phii\io|\na w|^2=0.
\]
If we let $\my=\io |\na w|^2$, then due to $\phii\in W_0^{1,2}(\Om)$ being arbitrary, $\psi:=\frac{w}{\my}$ evidently is a weak solution of $-\Delta \psi =1$, $\psi\amrand=0$ and has to coincide with $\Phi$ from Definition \ref{def:Phi}. Moreover, $\io\Phi = \io \frac{w}{\my} = \frac1{\my}$ implies $\my=\frac1{\io\Phi}$ as well as $w = \frac{\Phi}{\io\Phi}$.
\end{proof}


\section{Convergence. Proof of Theorem \ref{thm:main}}\label{sec:conv}
\subsection{Convergence of $\io |\na u(t)|^2$}

\begin{lemma}\label{lem:limitofnorm} For the solution $u$ provided by Theorem \ref{thm:ex}, the limit 
 \[
  A := \limtinf \io |\na u(t)|^2
 \]
 exists. 
Furthermore, $A$ coincides with the minimum computed in Section \ref{sec:min}, that is, $A=\frac1{\io \Phi}$.
\end{lemma}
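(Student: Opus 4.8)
The plan is to isolate two separate assertions: that the monotone limit $A$ exists, which is soft, and that $A=\frac{1}{\io\Phi}$, which carries the real content. For the former, set $f(t):=\io|\na u(t)|^2$. By Lemma \ref{lem:mon} the map $f$ is nonincreasing on $[0,\infty)\setminus N$, and by Corollary \ref{cor:naubd} it is bounded; hence $\lim_{t\to\infty,\,t\notin N}f(t)=:A$ exists, and since $f(t)\le f(s)$ whenever $s\notin N$ and $s<t$, one also gets $\limsup_{t\to\infty}f(t)\le A$. The easy half of the identification is the lower bound: for a.e.\ $t$ one has $u(t)\in\Wnezom$ with $\io u(t)=1$ by Lemma \ref{lem:massconservation}, so $u(t)$ is admissible in the minimization of Theorem \ref{thm:min} and $f(t)=J(u(t))\ge\frac{1}{\io\Phi}$; letting $t\to\infty$ gives $A\ge\frac{1}{\io\Phi}$.

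For the reverse inequality I would exploit the minimizer $w:=\frac{1}{\io\Phi}\Phi$ together with the energy dissipation. First, the Euler--Lagrange relation established in the proof of Theorem \ref{thm:min}, namely $\io\na w\cdot\na\phii=\frac{1}{\io\Phi}\io\phii$ for all $\phii\in\Wnezom$, applied with $\phii=u(t)-w$ (legitimate since both $u(t)$ and $w$ lie in $M$) yields the orthogonality $\io\na w\cdot\na(u(t)-w)=0$ and hence the exact expansion
\[
 \delta(t):=f(t)-\tfrac{1}{\io\Phi}=\io|\na(u(t)-w)|^2\ge0 ,
\]
which incidentally already reduces Theorem \ref{thm:main} to the present lemma. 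Next, integrating \eqref{eq:uet2ubd} in time and passing to the limit $\eps\to0$ (via weak lower semicontinuity of $(\xi,\eta)\mapsto\xi^2/\eta$ together with \eqref{eq:convut}, \eqref{eq:convucl2ae} and local positivity of $u$) furnishes the dissipation bound $\int_0^\infty\io\frac{u_t^2}{u}<\infty$.

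The crucial step is the representation $\delta(t)=\io\frac{u_t}{u}\,w$ for a.e.\ $t$. I would obtain it by testing the weak formulation with $\phii=\frac{w}{u}$: since $\na(u\phii)=\na w$ and $\io u\phii=\io w=1$, localizing in time (as with the function $\chi$ in Lemma \ref{lem:massconservation}) gives $\io\frac{u_t}{u}w+\io\na u\cdot\na w=f(t)$, and inserting $\io\na u\cdot\na w=\frac{1}{\io\Phi}$ produces the claim. By Cauchy--Schwarz, $\delta(t)^2\le\io\frac{u_t^2}{u}\cdot\io\frac{w^2}{u}$, so if $\io\frac{w^2}{u(t)}$ stays bounded uniformly then $\int_0^\infty\delta^2<\infty$, and if it only stays bounded along some sequence $t_j\to\infty$ then $\liminf_{t\to\infty}\delta=0$; in either case the monotonicity of $\delta$ forces $\delta\to0$, i.e.\ $A=\frac{1}{\io\Phi}$.

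The main obstacle is precisely the admissibility of $\frac{w}{u}$ and the control of $\io\frac{w^2}{u(t)}$, both of which require a lower bound $u(\cdot,t)\gtrsim\Phi$ near $\dOm$ that is uniform in $t$---whereas the only a priori lower estimate at hand, the comparison function used in the proof of Theorem \ref{thm:ex}, degenerates as $t\to\infty$. I would handle this by localization: run the identity and the Cauchy--Schwarz estimate with $w$ replaced by $w\zeta$ for a cutoff $\zeta$ vanishing near $\dOm$, and absorb the resulting boundary-layer error using that $\int_0^{T}\int_{\Om\setminus K}|\na u|^2$ is small uniformly (as in the proof of Theorem \ref{thm:ex}) and that $\io w^2$ over a neighbourhood of $\dOm$ is small. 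A fallback is a time-shift compactness argument: extract from $u(\cdot,t_j+\cdot)$ a limit that is a stationary solution in $M$, identify it with $w$ via Theorem \ref{thm:min}, and derive a contradiction with $A>\frac{1}{\io\Phi}$; the degenerate boundary behaviour is the delicate point there as well.
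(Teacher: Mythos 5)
Your soft steps (existence of the limit from Lemma \ref{lem:mon} and Corollary \ref{cor:naubd}, and the lower bound $A\ge\frac{1}{\io\Phi}$ from Lemma \ref{lem:massconservation} plus Theorem \ref{thm:min}) coincide with the paper's, and your orthogonality expansion $\io|\na u(t)|^2-\frac{1}{\io\Phi}=\io|\na(u(t)-w)|^2$, based on the Euler--Lagrange relation for $w=\frac{\Phi}{\io\Phi}$, is correct and would indeed give Theorem \ref{thm:main} at once. However, the step that carries all the content, $A\le\frac{1}{\io\Phi}$, is not established. Its cornerstone, the identity $\delta(t)=\io\frac{u_t}{u}w$, rests on the test function $\phii=\frac{w}{u}$, which is inadmissible under Remark \ref{rem:lsgdef}(ii): $w$ is comparable to the distance to $\dOm$ near the boundary, while $u$ is only \emph{locally} positive, with the sole available interior lower bound (the comparison function in the proof of Theorem \ref{thm:ex}) decaying like $(1+t)^{-1}$; no estimate $u\gtrsim\Phi$ near $\dOm$ is known, uniformly in $t$ or otherwise. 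You name this obstacle yourself, but the proposed cutoff repair is only a sketch: one must actually show that the error terms $\io w\,\na u\cdot\na\zeta$, $\io(1-\zeta)\na u\cdot\na w$ and $\io w(1-\zeta)\io|\na u|^2$ are small uniformly in time, and, more importantly, that the weight $\io\frac{w^2\zeta^2}{u(t)}$ grows at most like $1+t$, after which one still needs a weighted argument ($\int^\infty\frac{\delta(t)^2}{1+t}\,dt<\infty$ combined with monotonicity of $\delta$ and divergence of $\int^\infty\frac{dt}{1+t}$) to force $\delta\to0$. Your stated conclusion is also logically defective as written: if $\io\frac{w^2}{u(\cdot)}$ is bounded only along \emph{some} sequence $t_j\to\infty$, Cauchy--Schwarz bounds $\delta(t_j)$ by $\bigl(\io\frac{u_t^2}{u}(t_j)\bigr)^{1/2}C^{1/2}$, and finiteness of $\int_0^\infty\io\frac{u_t^2}{u}$ does not make $\io\frac{u_t^2}{u}$ small along that \emph{prescribed} sequence, so ``$\liminf_{t\to\infty}\delta=0$'' does not follow. (The dissipation bound $\int_0^\infty\io\frac{u_t^2}{u}<\infty$ itself, obtained by passing to the limit in \eqref{eq:uet2ubd}, also needs a joint lower-semicontinuity argument that you mention but do not supply.)

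It is instructive to see how the paper sidesteps exactly this obstacle. Instead of cutting off $w$, it argues by contradiction: assuming $A>\frac{1}{\io\Phi}$, it replaces $\Phi$ by the torsion function $\phi$ of a compactly contained subdomain $\Om'\subsub\Om$ ($-\Lap\phi=1$ in $\Om'$, $\phi|_{\partial\Om'}=0$, extended by zero), with $\Om'$ so large that still $\io\phi>\frac1A$. Then $\phii=\frac{\phi}{u}\chi_{[0,T]}$ \emph{is} admissible, since only interior positivity of $u$ is needed, and $\na(u\phii)=\na\phi$ preserves the exact structure $-\Lap\phi=1$. The second difference is how the identity is exploited: rather than Cauchy--Schwarz against a dissipation integral, the paper evaluates $\intnT\frac{u_t}{u}=\ln u(T)-\ln u_0$ exactly, so that the elementary bound $\ln u\le u$ together with mass conservation caps $\io\phi\ln u(T)$ by $\norm[\infty]{\phi}$, while the hypothesis $A\io\phi>1$ forces linear growth in $T$ --- a contradiction requiring no uniform-in-time lower bound on $u$, no rate information, and no dissipation estimate. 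Your strategy is likely salvageable along the lines you indicate, but as it stands the decisive inequality is assumed rather than proved, precisely at the point where the paper's choice of test function does genuine work.
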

\begin{proof}
Let $(s_k)_{k\in\N}\subset (0,\infty)\setminus N$, where $N$ is the null set from Lemma \ref{lem:mon}, be a sequence with $s_k\to\infty$ as $k\to\infty$. Then $\int|\na u(s_k)|^2$ is monotone decreasing and hence converges to its (nonnegative) infimum. \\ 
Let $A:= \esslimtinf \io |\na u(t)|^2$. For almost all $t>0$, $\io |\na u(t)|^2\ge A$. We will first determine the value of $A$ and finally show that actually $A$ is the limit.

For all $t>0$, we have $\io u(t)=1$ and as a consequence of Section \ref{sec:min} 
\begin{equation}\label{eq:nauge}
\io |\na u(t)|^2\ge \frac1{\io\Phi},
\end{equation}
 and hence also $A\ge \frac1{\io\Phi}$.

Assume $A>\frac1{\io\Phi}$, that is $\io \Phi > \frac1A$. Then we can find a subdomain $\Om'\subsub \Om$ such that the solution $\phi$ of $-\Laplace \phi = 1$ in $\Om'$, $\phi|_{\partial \Om'}=0$ still satisfies 
\begin{equation}\label{eq:choosephi}
\io \phi > \frac1A
\end{equation}
where we understand $\phi$ as being zero on $\Om\setminus \Om'$.
With this extended definition at hand, for $T>0$ and with $\chi_{[0,T]}$ denoting the characteristic function of the interval $[0,T]$, we take 
\[
 \phii = \frac{\phi}{u}\chi_{[0,T]} \in L^2((0,\infty);\Wnezom)\cap L^\infty(\Om\times(0,\infty))
\]
as test function, which is possible by Remark \ref{rem:lsgdef} ii). Inserting this function into 
\[
 \intninf \io u_t \phii +\intninf\io \na u\cdot\na (u\phii) = \intninf \io u\phii\io |\na u|^2,
\]
we obtain 
\begin{equation}\label{eq:getestet}
 \io \phi\intnT \frac{u_t}{u} + \intnT\io \na u\cdot\na\phi = \intnT\io \phi\io |\na u|^2,
\end{equation}
where evaluation of $\intnT \frac{u_t}u$ 
and integration by parts transform the left hand side according to
\begin{align}\label{eq:estimatephilnu}
 \io \phi\intnT \frac{u_t}{u} + \intnT\io \na u\cdot\na\phi =& \io \phi\left[\ln u(T)- \ln u_0\right] + \intnT\int_{\partial \Om'} u\underbrace{\na\phi\cdot\ny}_{\le 0} - \intnT \int_{\Om'} u\Laplace \phi\nn\\
 \le& \io \phi \ln u(T) - \io \phi \ln u_0 + \intnT \int_{\Om'} u\nn\\
\le& \io \phi \ln u(T) - \io \phi \ln u_0 + T,
\end{align}
because $\int_{\Om'} u(t) \le 1$ for all $t>0$.\\
Combining this again with \eqref{eq:getestet} and the fact that $\io |\na u(t)|^2\ge A$ for almost all $t>0$, we see that 
\[
 \io \phi \ln u(T) \ge \io \phi\ln u_0 + \intnT \left(\io \phi\io |\na u|^2 - 1\right) \ge \io \phi\ln u_0 + \intnT \left(A\io \phi - 1\right),
\]
where the last integrand is positive by \eqref{eq:choosephi}. By $\phi$ being compactly supported and (H2), $\io \phi\ln u_0$ is finite, 
and we may conclude that $\io \phi \ln u(T) \to \infty$ as $T\to \infty$.
But this is a contradiction to 
\[
 \infty > \norm[\infty]{\phi} = \norm[\infty]{\phi}\io u(T) \ge \io \phi \ln u(T),
\]
and hence $A \le \frac1{\io \Phi}$.\\
We still have to show that $A$ is the limit. For this, 
let $(\tau_k)_{k\in\N}\sub (0,\infty)$ be a sequence with $\limk \tau_k=\infty$, where $\tau_k \in N$ is not excluded. Let $(t_k)_{k\in\N}\sub (0,\infty)\setminus N$ be such that $t_k<\tau_k$ for each $k\in\N$ and $\limk t_k=\infty$. Because \eqref{eq:nauge} holds for any $t>0$ and Lemma \ref{lem:mon} is applicable to $t_k\notin N$, $\tau_k>t_k$, we obtain
\[
  A =\frac{1}{\io \Phi} \le \io |\na u(\tau_k)|^2\le \io |\na u(t_k)|^2\to A
\]
as $k\to \infty$.
\end{proof}

\subsection{Convergence of $u$}
\begin{lemma}\label{lem:weakconv}
Let $u$ be a solution provided by Theorem \ref{thm:ex}. Then 
 \begin{equation}\label{eq:weakconv}
  u(t) \wto \frac{1}{\io \Phi} \Phi
 \end{equation}
 weakly in $\Wnezom$ as $t\to\infty$.
\end{lemma}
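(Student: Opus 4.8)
The plan is to combine the sharp value of the asymptotic Dirichlet energy from Lemma~\ref{lem:limitofnorm} with the conserved mass from Lemma~\ref{lem:massconservation} and the fact, established in Theorem~\ref{thm:min}, that $w:=\frac{\Phi}{\io\Phi}$ is the \emph{unique} minimizer of $J(v)=\io|\na v|^2$ over $M=\set{v\in\Wnezom;\ \io v=1}$, with $\min_{v\in M}J(v)=\frac1{\io\Phi}$. Since the prospective limit $w$ is unique and the relevant sequences will turn out to be bounded, it suffices to prove that every sequence $t_k\to\infty$ admits a subsequence along which $u(t_k)\wto w$ weakly in $\Wnezom$; the full convergence \eqref{eq:weakconv} then follows by the usual subsequence argument (if \eqref{eq:weakconv} failed, some sequence $t_k\to\infty$ would stay outside a fixed weak neighbourhood of $w$, contradicting the subsequential claim).

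First I would fix an arbitrary sequence $t_k\to\infty$. By Corollary~\ref{cor:naubd} the numbers $\io|\na u(t_k)|^2$ are bounded, so $(u(t_k))_k$ is bounded in $\Wnezom$ and, after passing to a subsequence (not relabelled), $u(t_k)\wto v$ for some $v\in\Wnezom$. Since $\Om$ is bounded, $1\in\Lzom\sub(\Wnezom)^\ast$, so weak convergence in $\Wnezom$ yields $\io u(t_k)\to\io v$; as $\io u(t_k)=1$ for every $k$ by Lemma~\ref{lem:massconservation}, the limit satisfies $\io v=1$, i.e.\ $v\in M$. (Equivalently, one may invoke the compact embedding $\Wnezom\cptembeddedinto\Lzom$ to pass to the limit in the mass.)

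It then remains to identify $v=w$. Here the decisive input is that, by Lemma~\ref{lem:limitofnorm}, $\io|\na u(t_k)|^2\to A=\frac1{\io\Phi}=\min_{v\in M}J(v)$. Exploiting the weak lower semicontinuity of $J$ on $\Wnezom$ (as used already in the existence part of Theorem~\ref{thm:min}) gives $J(v)\le\liminfk J(u(t_k))=\frac1{\io\Phi}=\min_{v\in M}J(v)$, while $v\in M$ forces $J(v)\ge\min_{v\in M}J(v)$. Hence $v$ minimizes $J$ over $M$, and by the uniqueness asserted in Theorem~\ref{thm:min} we conclude $v=w=\frac{\Phi}{\io\Phi}$.

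The genuine difficulty of this statement has, in effect, already been overcome in Lemma~\ref{lem:limitofnorm}: everything hinges on the \emph{exact} matching of the asymptotic energy value $A$ with the constrained minimum $\frac1{\io\Phi}$, since it is precisely this equality that pins the weak limit down to the unique minimizer; a mere bound $A\ge\frac1{\io\Phi}$ together with lower semicontinuity would not suffice to determine $v$. What remains here is soft, and I expect no serious obstacle: the only points requiring a little care are the passage from subsequential limits to the full limit \eqref{eq:weakconv} and the verification that the constraint $\io v=1$ survives the weak limit, both of which are routine once boundedness (Corollary~\ref{cor:naubd}) and mass conservation (Lemma~\ref{lem:massconservation}) are in hand.
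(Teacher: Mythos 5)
Your proposal is correct and follows essentially the same route as the paper: boundedness from Corollary~\ref{cor:naubd} to extract a weakly convergent subsequence, passage of the mass constraint $\io u(t_k)=1$ to the weak limit, and then weak lower semicontinuity combined with Lemma~\ref{lem:limitofnorm} and the uniqueness of the minimizer in Theorem~\ref{thm:min} to identify the limit as $\frac{\Phi}{\io\Phi}$. The only cosmetic difference is that the paper organizes the subsequence argument as a proof by contradiction, whereas you state it directly; the mathematical content is identical.
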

\begin{proof}
 Assume \eqref{eq:weakconv} to be false. Then there exists a sequence $t_k\to \infty$ such that no subsequence $(t_{k_l})_{l\in\N}$ converges to $\frac{1}{\io \Phi} \Phi$ weakly in $\Wnezom$. 
 But Corollary \ref{cor:naubd} ensures the existence of a weakly convergent subsequence $(u(t_{k_l}))_{l\in\N}$ of $(t_k)_{k\in\N}$, whose limit we call $u_\infty$. 
 From this weak convergence, we also obtain $\io u(t_{k_l}) \to \io u_\infty = 1$.
 By the minimum results from Section \ref{sec:min}, weak lower semicontinuity of the norm and the convergence result from Lemma \ref{lem:limitofnorm} 
 \[
   \frac1{\io\Phi} \le \io |\na u_\infty|^2 \le \liminfk \io |\na u(t_k)|^2 = \frac1{\io\Phi}, 
 \]
 hence $u_\infty$ must coincide with the unique minimizer from Theorem \ref{thm:min}, contradicting the choice of $(t_k)_k$. 
\end{proof}

\subsection{Proof of Theorem \ref{thm:main}}
\begin{proof}[Proof of Theorem \ref{thm:main}]
 In Lemma \ref{lem:weakconv} and Lemma \ref{lem:limitofnorm}, respectively, we have obtained weak convergence of $u(t)$ as $t\to\infty$ in $\Wnezom$ with limit $\frac{1}{\io \Phi}\Phi$  and convergence of the norm $\norm[\Wnezom]{u(t)}=\io |\na u(t)|^2$ to the norm $\norm[\Wnezom]{\frac{1}{\io \Phi}\Phi}=\frac{1}{\io \Phi}$ of the limit. Together, these imply convergence 
\[
 u(t)\to \frac{\Phi}{\io\Phi}\qquad \mbox{as }t\to \infty 
\]
in the Hilbert space $\Wnezom$.
\end{proof}

\section{Acknowledgements}
The author would like to thank M. Chipot for his comment on an earlier version of this article.

\def\cprime{$'$}

\end{document}